\DeclareSymbolFontAlphabet{\mathcal}{symbols}
\theoremstyle{plain}
\newtheorem{proposition}{Proposition}[section]
\newtheorem{theoremb}[proposition]{Theorem}
\newtheorem{theoremv}{Main Theorem}
\newtheorem{lemma}[proposition]{Lemma}
\newtheorem{corollary}[proposition]{Corollary}
\theoremstyle{definition}
\newtheorem{definition}[proposition]{Definition}
\newtheorem{example}[proposition]{Example}
\theoremstyle{remark}
\newtheorem{remark}[proposition]{Remark}
\newcommand{\secref}[1]{Section~\ref{#1}}
\newcommand{\thmref}[1]{Theorem~\ref{#1}}
\newcommand{\propref}[1]{Proposition~\ref{#1}}
\newcommand{\lemref}[1]{Lemma~\ref{#1}}
\newcommand{\corref}[1]{Corollary~\ref{#1}}
\newcommand{\exemref}[1]{Example~\ref{#1}}
\newcommand{\defref}[1]{Definition~\ref{#1}}
\renewcommand\l@subsection{\@tocline{2}{0pt}{2pc}{5pc}{}}
\renewcommand\l@subsubsection{\@tocline{3}{0pt}{4pc}{10pc}{}}
\def\R{{\mathbb R}}
\def\HH{\mathbb{H}}
\def\R{\mathbb{R}}
\def\ker{{\rm Ker\,}}
\def\Sp{{\rm Sp}}
\def\id{{\rm id}}
\newcommand{\Hprod}[1]{\langle #1\rangle}
\newcommand{\diag}{\mathrm{diag}}		
\newcommand{\Tr}{\mathrm{Tr}} 		
\newcommand{\grad}{\mathrm{grad}\,}	
\newcommand{\block}[2]{\addtolength{\arraycolsep}{-3pt}\begin{bmatrix}#1\cr#2\end{bmatrix}}
\renewcommand*\env@matrix[1][*\c@MaxMatrixCols c]{%
  \hskip 0pt
  \let\@ifnextchar\new@ifnextchar
  \array{#1}}
\newcommand{\proj}{{\rm proj}}
\newcommand{\Hess}{\mathrm{H}}
\newcommand{\PP}{\mathcal{P}}
\newcommand{\A}{\mathfrak{A}}
\newcommand{\Gr}{\mathrm{Gr}}
\begin{document}


\title[Non-linear Morse-Bott functions]{Non-linear Morse-Bott functions\\ on quaternionic Stiefel manifolds}

\author[E. Mac\'{\i}as-Virg\'os]{Enrique Mac\'{\i}as-Virg\'os}
\address{Institute of Mathematics\\
Department of Geometry and Topology\\
Universidade de Santiago de Compostela\\
15782 Spain}
\email{quique.macias@usc.es}

\author[M.-J. Pereira-S\'aez]{Mar\'{\i}a~Jos\'e~Pereira-S\'aez}
\address{ Diff. Geom. and appl. Research Group, 
		Universidade da Coru\~na \\
		15071
		Spain.}
\email{maria.jose.pereira@udc.es}

\author[D. Tanr\'e]{Daniel Tanr\'e}
\address{D\'epartement de Math\'ematiques \\
Universit\'e de Lille \\
59655 Villeneuve d'Ascq Cedex, France}
\email{Daniel.Tanre@univ-lille.fr}

\date{\today}

\begin{abstract} 
In the Stiefel manifold $X_{n,k}$, we replace Frankel linear height function 
%
by  a quadratic one.
%
We prove this is still a Morse-Bott function, whose  structure of
 critical levels presents a dichotomy according to the sign  of $n-2k$.
The critical submanifolds are no longer
Grassmannians but total spaces of fibrations of basis a product of two Grassmannians. We explicitly integrate the gradient flow.
\end{abstract}

\thanks{The three authors are  partially supported by the MINECO and 
FEDER research project MTM2016-78647-P.  The first author was partially supported by Xunta de Galicia ED431C 2019/10 with FEDER funds}

\subjclass[2010]{Primary 58E05; Secondary  22E20;  22F30}

\keywords{Quaternionic Stiefel manifold; Morse-Bott function.}

\maketitle


\tableofcontents

\newpage
\section*{Introduction}

Morse-Bott functions are a very effective tool for studying compact manifolds.
The pioneer work of T. Frankel (\cite{Frankel1963}) on Lie groups and Stiefel manifolds 
uses height functions defined, on the unitary group for instance, by the real part of the trace,
$A\mapsto \Re\Tr A$.
The associated critical submanifolds are Grassmannians and from this fact
H. Miller (\cite{MR816522}) proves that Stiefel manifolds admit a stable decomposition as Thom spaces of bundles over
these Grassmannians, see also \cite{MR1853464}.
These critical submanifolds have also been the origin of the work of H. Kadzisa and M. Mimura (\cite{KadzMimu2011})
for the construction of cone-decompositions of Stiefel manifolds and their application to Lusternik-Schnirelmann category. Some recent papers took up again the work of Frankel, by using linear and quadratic Morse-Bott functions on orthogonal groups and Stiefel manifolds,
as in \cite{MR2642452,MR3982635}.

\medskip
 In \cite{MR3621032},  height functions
 with respect to the hyperplan orthogonal to a given matrix $\omega$ are considered on quaternionic Stiefel manifolds. 
 All  those functions are Morse-Bott with Grassmannians as critical submanifolds and,
among them, the Morse functions are  characterized  according to  $\omega$.
Some consequences on the
Lusternik-Schnirelmann category are given.
In this work, we replace height functions by a non-linear function and prove that  the situation is different:
for  instance,  instead of  Grassmannians,
the critical submanifolds are  associated fibrations to the principal bundles of a product of two Stiefel manifolds, 
with basis a product of two Grassmannians, see \propref{FIBERBUNDLE}. 

Finally, we are able to explicitly integrate the gradient flow, for arbitrary initial data.

\medskip
Let us specify the notations used.
Let $\HH^n$ be the quaternionic $n$-space endowed with
the structure of a right $\HH$-vector space and the hermitian product $\Hprod{u,v}=u^*v$.
For $0\leq k\leq n$, let $X_{n,k}$ be the Stiefel manifold of linear maps $ \HH^k \to \HH^n$  preserving the  Hermitian product. Such map is identified with a matrix $x\in \HH^{n\times k}$  
represented by two blocks,
$$x=\block{T}{P},$$
where $T,P$ are quaternionic matrices of order $(n-k)\times k$ and $k\times k$, respectively. The condition
$x^*x=I_{k}$ becomes
$T^*T+P^*P=I_k$.
Let $\Sp(n)$ be the Lie group of $n\times n$ matrices $A$ such that $A^*A=I_n$. The linear left action of $\Sp(n)$ on $X_{n,k}$ is transitive and the isotropy group of $x_0=\block{0}{I_{k}}$ is isomorphic to $\Sp(n-k)$, so $X_{n,k}$ is diffeomorphic to $\Sp(n)/\Sp(n-k)$.

\medskip
If  $x=\block{T}{P}\in X_{n,k}$, we define the real number $h(x)$  as the trace of $P^*P$,
\begin{equation}\label{equa:h}
h(x)=\Tr{(P^*P)}.
\end{equation}
The purpose of this work is the study of the function $h\colon X_{n,k}\to \R$. 
In what follows we always assume  $k<n$ since we have $X_{n,n}=\Sp(n)$ where
the function $h$ is constant.

\begin{theoremv}
Let $0\leq k<n$ and $X_{n,k}=\Sp(n)/\Sp(n-k)$ be the  quaternionic Stiefel manifold.
The following properties are satisfied.
\begin{enumerate}[1)]
\item The function $h\colon x\mapsto \Tr{(P^*P)}$ is Morse-Bott.
\item The critical levels of $h$ are
\begin{itemize}
\item $q=0,\dots, k$, if $n\geq 2k$, 
\item $q=2k-n,\dots,k$, if $n\leq 2k$.
\end{itemize}
\item The critical submanifold $\Sigma_{q}$ is 
the total space of a principal fibration with fibre $\Sp(k)$ and 
base space the product of Grassmannians $\Gr_{n-k,p}\times \Gr_{k,k-p}$.
The index of $\Sigma_{q}$ is $4(n-2k+q)q$, with $q=k-p$.
\end{enumerate}
\end{theoremv}

Let us observe that the dichotomy between $n\geq 2k$ and $n<2k$, appearing in the previous statement, 
 is also reflected in Nishimoto's work (\cite{Nishimoto2007}) 
 on the Lusternik-Schnirelmann category of quaternionic Stiefel manifolds $X_{n,k}$.
 
 \medskip
 The proof of the Main Theorem occupies most of the rest of this work. 
 In \secref{sec:gradient}, we compute the gradient of $h$ and prove Assertion 2)  of the Main Theorem. 
 In \secref{sec:hessian}, we give explicit expressions of the Hessian
whereas the determination of its eigenvalues and eigenvectors is done in \secref{sec:eigenalgo}.
 In \secref{sec:criticalsubmanifolds}, we show that the group
 $K_{n,k}=\Sp(n-k)\times \Sp(k) \times \Sp(k)$
 acts transitively on the critical submanifold~$\Sigma_{q}$. 
 For this action, we  also determine the isotropy subgroup 
 as
$$L_{n,k,q}=\Sp(k-q)\times \Sp(n+q-2k)\times \Sp(k-q)\times \Sp(q).$$
The inclusion  $L_{n,k,q}\hookrightarrow K_{n,q}$, described in \corref{ISOTROPY},
brings up diagonal terms on certain factors 
 and thus the writing of $\Sigma_{q}$
 as a homogeneous space is not direct. Still, we express it as announced in the Assertion 3) 
 and detail some particular cases.
Assertion 1) is also a consequence of this determination, see \corref{cor:todobien}.
 
 Finally, 	in Section \ref{integration:flow}, we give an explicit description of the gradient flow of $h$.Ó

\section{Critical points}\label{sec:gradient}
Let $h\colon X_{n,k}\to\R$ be the function defined  
by $  h(x)= \Tr{(P^*P)}$
for $x=\block{T}{P}$. 

\subsection{Gradient} 
In this paragraph, we explicit the value of the gradient of $h$.

\begin{proposition}\label{prop:gradient}
The gradient of $h$ at the point $x=\block{T}{P}$ is given by
$$\grad h_x =-2\block{TP^*P}{(PP^*-I_{k})P}.$$
\end{proposition}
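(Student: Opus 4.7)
The plan is to realize $X_{n,k}$ as a Riemannian submanifold of the ambient quaternionic space $\HH^{n\times k}$, equipped with the real-valued inner product $\Hprod{A,B}=\Re\Tr(A^*B)$, compute the ambient (unconstrained) gradient of $h$, and then orthogonally project it onto $T_x X_{n,k}$.

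For the ambient gradient, extend $h$ to the function $H(x)=\Tr(x^*\Pi x)$ on $\HH^{n\times k}$, where $\Pi=\diag(0_{n-k},I_k)$ selects the lower block; a direct differentiation gives $\nabla H(x)=2\Pi x=2\block{0}{P}$. For the projection, first identify the tangent and normal spaces at $x$: from the constraint $x^*x=I_k$ one reads
$$T_x X_{n,k}=\{v\in\HH^{n\times k}\mid x^*v+v^*x=0\},$$
and a dimension count together with the identity $\Re\Tr(SA)=0$ (valid for any self-adjoint $S$ and skew-adjoint $A$, as a consequence of $\Re\Tr(M)=\Re\Tr(M^*)$) shows that the normal space is $N_x X_{n,k}=\{xS\mid S=S^*\}$. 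The orthogonal projection therefore takes the form $\pi(G)=G-xS$ with $S=\tfrac12(x^*G+(x^*G)^*)$, chosen so that $x^*(G-xS)$ becomes skew-adjoint.

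Specializing to $G=2\block{0}{P}$, one computes $x^*G=2P^*P$, which is already self-adjoint; hence $S=2P^*P$ and
$$\grad h_x=2\block{0}{P}-2\block{T}{P}P^*P=-2\block{TP^*P}{(PP^*-I_k)P}.$$
The only delicate step is the identification of the normal space in the quaternionic setting: the non-commutativity of $\HH$ forces one to work with $\Re\Tr$ rather than $\Tr$ (the trace is not cyclic over $\HH$) and to track self-adjointness of $x^*G$ carefully instead of hoping quantities commute out. Once that framework is in place, the remainder of the computation is routine, aided by the fortunate fact that $x^*\nabla H(x)$ is automatically Hermitian.
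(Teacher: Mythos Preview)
Your proof is correct and follows essentially the same approach as the paper: extend $h$ to the ambient space $\HH^{n\times k}$, compute the Euclidean gradient $2\block{0}{P}$, and subtract the normal component $\tfrac12 x(x^*G+G^*x)=2xP^*P$. The only cosmetic difference is that the paper derives the normal projection formula by reducing to the base point $x_0=\block{0}{I_k}$ via the transitive isometric $\Sp(n)$-action, whereas you identify $N_xX_{n,k}=\{xS:S=S^*\}$ directly by a dimension count and the orthogonality relation $\Re\Tr(SA)=0$.
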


We proceed as usual by immersing the manifold $X_{n,k}$  in 
$\HH^{n\times k}=\R^{4n\times 4k}$, where the Euclidean metric is given by
$\vert x \vert^2 =\Tr(x^*x)$. The gradient of $h$ is obtained from
the gradient of an extension  $\phi\colon \R^{4n\times 4k}\to\R$
of $h$ that we project on the tangent space $T_{x} X_{n,k}$ of $X_{n,k}$:
$$\grad h_x=\proj_x(\grad \phi_x),$$
where we denote by $\proj_x$ the projection onto the tangent space.
Let us begin by some lemmas in this direction.

\begin{lemma}\label{lem:GRADEXT}
The function
 $\phi\colon \R^{4n\times 4k} \to \R$, defined by $\phi(x)=\Tr(P^*P)$ for $x=\block{T}{P}$, extends $h$ to the whole Euclidean space.
If $x=\block{T}{P}$ then $\grad \phi_x=2\block{0}{P}$.
\end{lemma}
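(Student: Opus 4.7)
The plan is to treat this as a direct computation in the Euclidean space $\R^{4n\times 4k}$ after identifying quaternionic matrices with real matrices of appropriate size.

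First, the extension statement is essentially tautological: the expression $\Tr(P^*P)$ makes sense for any $k\times k$ quaternionic matrix $P$, without requiring the Stiefel condition $T^*T+P^*P=I_k$. Thus defining $\phi\block{T}{P}=\Tr(P^*P)$ on all of $\R^{4n\times 4k}$ clearly restricts to $h$ on $X_{n,k}$.

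For the gradient, I would use the standard identification $\HH^{n\times k}\cong \R^{4n\times 4k}$ via writing each quaternionic entry $P_{ij}=a_{ij}+b_{ij}\I+c_{ij}\J+d_{ij}\K$ and observing that the norm $|x|^2=\Tr(x^*x)$ agrees with the usual Euclidean sum of squares, so the Euclidean inner product of the ambient space is $\langle x,y\rangle=\Re\Tr(x^*y)$. Then I rewrite
$$\phi(x)=\Tr(P^*P)=\sum_{i,j}|P_{ij}|^2=\sum_{i,j}(a_{ij}^2+b_{ij}^2+c_{ij}^2+d_{ij}^2),$$
which is a pure sum of squares in the real coordinates of $P$ and is independent of $T$. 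Differentiating coordinatewise immediately yields partial derivatives $2a_{ij},2b_{ij},2c_{ij},2d_{ij}$ in the $P$-block and zero in the $T$-block, which reassemble into the matrix $2\block{0}{P}$, as claimed.

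The only step requiring any care is the verification that the Euclidean metric on $\R^{4n\times 4k}$ really coincides with the one induced by $|x|^2=\Tr(x^*x)$, but this follows at once from the identity $\Tr(x^*x)=\sum_{i,j}|x_{ij}|^2$ together with the definition of the quaternionic modulus. There is no genuine obstacle here; the lemma is a preparatory calculation whose role is to feed the projection formula $\grad h_x=\proj_x(\grad\phi_x)$ used in the subsequent proof of \propref{prop:gradient}.
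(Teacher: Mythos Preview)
Your argument is correct. The paper takes a slightly different route: instead of expanding $\Tr(P^*P)$ into real coordinates and differentiating each square, it computes the directional derivative
\[
d\phi_x(u)=\lim_{t\to 0}\tfrac{1}{t}\bigl(\phi(x+tu)-\phi(x)\bigr)=2\Re\Tr(P^*\pi),\qquad u=\block{\tau}{\pi},
\]
and then identifies $\grad\phi_x=\block{C}{D}$ via the duality $d\phi_x(u)=\Re\Tr(C^*\tau+D^*\pi)$, forcing $C=0$, $D=2P$. Your coordinate expansion is more elementary and makes the sum-of-squares structure transparent; the paper's version is coordinate-free and sets up the inner product $\langle x,y\rangle=\Re\Tr(x^*y)$ that is reused in later computations (e.g.\ the Hessian). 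Either way the content is the same trivial calculation.
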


\begin{proof}
The differential $d\phi_x \colon T_x\R^{4n\times 4k}\to T_{\phi(x)}\R$ is given by
$d\phi_x(u)=\lim_{t\to 0}\frac{1}{t}(\phi(x+tu)-\phi(x))$.
If $u=\block{\tau}{\pi}$ with $\tau\in \HH^{n-k\times k}$ and $\pi\in\HH^{k\times k}$, then we have 
$d\phi_x(u)=2\Re \Tr(P^*\pi)$. 

By definition, if $v=\grad\phi_x=\block{C}{D}$, then we have 
$d\phi_x(u)=\langle v,u\rangle =\Re\Tr(v^*u)$, for all $u\in T_x\R^{4n\times 4k}$, that is,
$$2\Re\Tr(P^*\pi)=\Re\Tr(C^*\tau+D^*\pi),$$ for all $\tau$ and $\pi$.
This implies $C=0$ and $D=2P$.
\end{proof}

For the next step, we need to determine the tangent space  $T_{x} X_{n,k}$. 
Let us begin with $x_0=\block{0}{I_k}$. The tangent space in $x_{0}$ is formed by the vectors
$v=\block{X}{Y}$, $X\in\HH^{n-k\times k}$, $Y\in \HH^{k\times k}$,  with $Y+Y^*=0$. 
(This corresponds to the equation $x_0^*v+v^*x_0=0$.)
Any other point of the Stiefel manifold can be written as $x=Ax_0$, with $A\in\Sp(n)$, so the tangent space 
 is given by $T_{x}X_{n,k}=A \,T_{x_0}X_{n,k}$.

\begin{lemma}\label{lem:NORMPROJECTION}
If $x\in X_{n,k}$, the projection of a vector $u\in\R^{4n\times 4k}$ onto the {\em normal} vector space $\nu_x$  is
$$\proj_x^\perp u =\frac{1}{2}{x(x^*u+u^*x)}.$$
\end{lemma}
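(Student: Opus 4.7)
The plan is first to give an intrinsic description of the tangent space at every point $x\in X_{n,k}$, then to exhibit the announced formula as an orthogonal decomposition.

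Differentiating the defining equation $x^*x=I_k$ yields
$$T_xX_{n,k}=\{v\in\R^{4n\times 4k}\mid x^*v+v^*x=0\}.$$
This agrees with the description given for $x_0=\block{0}{I_k}$ (where the condition reduces to $Y+Y^*=0$) and is preserved under the left action of $\Sp(n)$, since $(Ax_0)^*(Aw)+(Aw)^*(Ax_0)=x_0^*w+w^*x_0$ when $A^*A=I_n$. Hence this is exactly the tangent space identified by the $\Sp(n)$-action, $T_xX_{n,k}=A\,T_{x_0}X_{n,k}$ for $x=Ax_0$. Note that for such a tangent vector $v$, the matrix $x^*v$ is \emph{anti-Hermitian}: $(x^*v)^*=v^*x=-x^*v$.

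Set $S=\tfrac{1}{2}(x^*u+u^*x)$ and $w=xS$. Since $S=S^*$ by construction, $w$ is of the form (column in $x$)$\,\cdot\,$(Hermitian). I claim first that $w$ is normal to $X_{n,k}$ at $x$. For any tangent vector $v$,
$$\langle w,v\rangle=\Re\Tr(w^*v)=\Re\Tr(S^*x^*v)=\Re\Tr(Sx^*v).$$
Writing $A=x^*v$, which is anti-Hermitian by the previous paragraph, and using the cyclic property of the real trace ($\Re\Tr(MN)=\Re\Tr(NM)$) together with $\Re\Tr(M)=\Re\Tr(M^*)$, one gets
$$\Re\Tr(SA)=\Re\Tr((SA)^*)=\Re\Tr(A^*S^*)=-\Re\Tr(AS)=-\Re\Tr(SA),$$
so that $\Re\Tr(SA)=0$. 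Thus $\langle w,v\rangle=0$ for every $v\in T_xX_{n,k}$.

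It remains to check that $u-w$ is tangent, i.e.\ satisfies the equation $x^*(u-w)+(u-w)^*x=0$. Using $x^*x=I_k$ and $S=S^*$,
$$x^*(u-w)+(u-w)^*x=(x^*u+u^*x)-(S+S^*)=(x^*u+u^*x)-2S=0$$
by the very definition of $S$. Hence $u=(u-w)+w$ is the orthogonal decomposition of $u$ with respect to $T_xX_{n,k}\oplus\nu_x$, which gives $\proj_x^\perp u=w=\tfrac{1}{2}x(x^*u+u^*x)$. The only subtle point, and the one I expect to require care, is the vanishing $\Re\Tr(SA)=0$ for $S$ Hermitian and $A$ anti-Hermitian over $\HH$; this is where noncommutativity could bite, but the real-trace cyclicity suffices.
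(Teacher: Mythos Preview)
Your argument is correct. The paper takes a different route: it first checks the formula at the base point $x_0=\block{0}{I_k}$ by explicitly writing the tangent and normal parts of $u=\block{A}{B}$ as $\block{A}{(1/2)(B-B^*)}$ and $\block{0}{(1/2)(B+B^*)}$, and then transports the result to an arbitrary $x=Ax_0$ via the isometric left $\Sp(n)$-action, using $\proj_x^\perp u = A\,\proj_{x_0}^\perp(A^*u)$. Your approach instead works intrinsically at every $x$: you characterize $T_xX_{n,k}$ by the equation $x^*v+v^*x=0$ and verify directly that $u=(u-xS)+xS$ is the orthogonal splitting, the key point being that $\Re\Tr(SA)=0$ when $S$ is Hermitian and $A$ anti-Hermitian (which you handle correctly using real-trace cyclicity over $\HH$). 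The paper's method leverages the homogeneous structure and keeps the linear algebra at $x_0$ trivial; yours avoids the reduction step entirely and yields the formula in one stroke, at the cost of the small trace computation you flagged.
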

\begin{proof}
We begin with $x_0=\block{0}{I_k}$. If $u=\block{A}{B}$, then its tangent part is $u^\top=\block{A}{(1/2)(B-B^*)}$ and its normal part is 
$$u^\perp=\block{0}{(1/2)(B+B^*)}=\frac{1}{2}x_0(x^*_0u+u^*x_0).$$ This can be checked by proving that they are orthogonal for the Hermitian product $v^*w$ and that the first one lies on the tangent space described above.
 Now,  in general, since $x=Ax_0$ and the product with $A$ is an isometry, we have:
 $$\proj_x^\perp u =A\,\proj^\perp_{x_0}(A^*u)=A\,(\frac{1}{2}x_0(x^*_0A^*u+u^*Ax_0))=\frac{1}{2}{x(x^*u+u^*x)}. 
 \qedhere$$
\end{proof}

\begin{proof}[Proof of \propref{prop:gradient}]
From Lemmas \ref{lem:GRADEXT} and \ref{lem:NORMPROJECTION}, the normal projection of $u=\grad \phi_x$ is
$$\proj^\perp_x u=\frac{1}{2}{x\left(x^*\block{0}{2P}+\block{0}{2P}^*x\right)}=\block{T}{P}(2P^*P),$$
so  $$\grad h_x=\proj_x u=u-\proj_x^\perp u=\block{0}{2P}-\block{2TP^*P}{2PP^*P},$$
and the result follows.
\end{proof}

\subsection{Critical points and levels}
The previous determination of the gradient gives a characterization of the critical points and values.

\begin{proposition}\label{prop:points}
 \mbox{}
\begin{enumerate}[1)]
\item The point $x=\block{T}{P}$ is a critical point of $h$ if, and only if, 
$TP^*=0$.
\item If  $n\geq 2k$,  the function $h$ has  $k+1$ critical values $q=0,\dots, k$.
\item If  $n\leq 2k$, then $h$ has  $n-k+1$ critical values, $q=2k-n,\dots,k$.
\end{enumerate}
\end{proposition}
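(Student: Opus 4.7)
The three assertions all flow from the gradient formula of \propref{prop:gradient}. The plan is to exploit the constraint $T^*T+P^*P=I_k$ together with $\grad h_x=0$ to show that at a critical point $P^*P$ becomes a Hermitian idempotent, so its trace, which equals $h(x)$, is forced to be an integer in $\{0,\dots,k\}$. A rank count then delivers the admissible range and explicit block-diagonal constructions realize every value.

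For assertion 1), the vanishing of $\grad h_x$ in \propref{prop:gradient} is equivalent to the two equations $TP^*P=0$ and $PP^*P=P$. If $TP^*=0$ then $TP^*P=0$ trivially; right-multiplying $T^*T+P^*P=I_k$ by $P^*$ and using $T^*(TP^*)=0$ gives $P^*PP^*=P^*$, whose adjoint is $PP^*P=P$. Conversely, starting from the critical-point equations, the adjoint of $PP^*P=P$ is $P^*PP^*=P^*$, and then $TP^*=T(P^*PP^*)=(TP^*P)P^*=0$.

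At any critical point, $PP^*P=P$ immediately yields $(P^*P)^2=P^*(PP^*P)=P^*P$, so the quaternionic Hermitian matrix $P^*P$ is idempotent, with real spectrum in $\{0,1\}$. Hence $h(x)=\Tr(P^*P)=q$ where $q=\rank(P^*P)\in\{0,\dots,k\}$. Applying the same observation to $T^*T=I_k-P^*P$, which is likewise Hermitian idempotent with $k-q$ eigenvalues equal to $1$, and using that $T\in\HH^{(n-k)\times k}$ forces $\rank T\leq\min(n-k,k)$, one obtains $k-q\leq\min(n-k,k)$. When $n\geq 2k$ this reduces to $q\geq 0$, giving the range in assertion 2); when $n\leq 2k$ it reduces to $q\geq 2k-n$, giving assertion 3).

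To close the argument, each admissible $q$ must be realized: take $P$ to be the diagonal matrix with $q$ ones followed by $k-q$ zeros, and $T$ the $(n-k)\times k$ matrix whose first $q$ columns vanish and whose last $k-q$ columns form an orthonormal family in $\HH^{n-k}$, which is possible precisely because $k-q\leq n-k$. Then $T^*T+P^*P=I_k$, $TP^*=0$, and $h(x)=q$. The main obstacle in the above plan is the algebraic manipulation deriving $TP^*=0$ from the critical-point equations, but this reduces to a single application of the adjoint identity $P^*PP^*=P^*$.
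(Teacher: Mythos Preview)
Your argument is correct and slightly more streamlined than the paper's. The two proofs agree on the overall strategy---show that $\grad h_x=0$ forces $P^*P$ to be a Hermitian idempotent, hence $h(x)\in\{0,\dots,k\}$, and then determine which values actually occur---but differ in two technical points.

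For assertion~1), the paper first diagonalises $P$ via its singular value decomposition, writes $P^*P=b\begin{bmatrix}0_p&0\cr 0&I_q\end{bmatrix}b^*$, and then deduces $TP^*=0$ from $TP^*P=0$ by inserting $b^*b=I_k$ between the diagonal factor and $a^*$. You bypass the SVD entirely: the adjoint of the critical-point equation $PP^*P=P$ gives $P^*PP^*=P^*$, and then $TP^*=(TP^*P)P^*=0$ in one line. This is neater and avoids any appeal to a normal form.

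For assertions~2) and~3), the paper quotes the relative SVD of \cite{MR3990067} to put $T$ in the form $m\begin{bmatrix}D\cr 0\end{bmatrix}b^*$ (or its transpose when $n-k<k$) and reads off $D=\begin{bmatrix}I_p&0\cr 0&0\end{bmatrix}$ from $T^*T+P^*P=I_k$, whence the bound on $p$. You replace this by the rank inequality $k-q=\rank(T^*T)=\rank T\le n-k$, which is more elementary and self-contained; you might make the step $\rank(T^*T)=\rank T$ explicit, since over $\HH$ it still follows from $\ker T^*T=\ker T$. The paper's SVD route has the side benefit of producing exactly the normal form used later for the notable points $x_0^q$ and the transitive $K_{n,k}$-action, but for the proposition itself your argument suffices.
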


\begin{proof}
1) The condition $\grad h_x=0$ implies that $P=PP^*P$, so $P^*P=(P^*P)^2$. But since $P\in\HH^{k\times k}$ is a square matrix, it follows that $P^*P$ is a Hermitian matrix $S$, which is semidefinite positive since for any
$u\in\HH^k$, $u\neq 0$, we have
$$u^*Su= \langle u,Su\rangle=\langle u,P^*Pu\rangle=\langle Pu,Pu\rangle=\vert Pu\vert^2\geq 0.$$
The eigenvalues of $S$ are thus non-negative real numbers  $s_1,\dots,s_k$, verifying $s_i^2=s_i$, so $s_i=0,1$.
We  write a singular value decomposition (henceforth SVD) of $P$ (\cite{MR2990115}) as
$$P=a\begin{bmatrix}0_p&0\cr 0 &I_q\end{bmatrix}b^*, \quad a,b\in\Sp(k), \quad p+q=k,$$
and the corresponding diagonalization
$$S=P^*P=b\begin{bmatrix}0_p&0\cr 0 &I_q\end{bmatrix}b^*.$$
The condition $\grad h_x=0$ in \propref{prop:gradient} also implies $TP^*P=0$, so
$$Tb\begin{bmatrix}0_p&0\cr 0 &I_q\end{bmatrix}b^*=0$$
and, using  $b^*b=I_k$, 
$$TP^*=Tb\begin{bmatrix}0_p&0\cr 0 &I_q\end{bmatrix}b^*ba^*=0.$$
For the reciprocal, it is clear that the two conditions  $TP^*=0$ and $PP^*P=P$ imply $\grad h_x=0$. But the second 
condition is implied by the first one, since
$TP^*=0$ implies $(I_{k}-P^*P)P^*=T^*TP^*=0$ and $P^*PP^*=P^*$.

\medskip
2) Let $x$ be a critical point and suppose $n-k\geq k$. From the relative SVD developed in \cite{MR3990067},
there exists a SVD of $T$ as
$$m\block{D}{0}b^*, \quad D=\diag[t_1,\dots ,t_{k}].$$
So $T^*=b\begin{bmatrix}D& 0\cr\end{bmatrix}m^*$,
$T^*T=bD^2b^*$  
and the condition $T^*T+P^*P=I_{k}$ implies 
$$D^2+\begin{bmatrix}0_p&0\cr 0 &I_q\end{bmatrix}=I_k$$
then $D=\begin{bmatrix}I_p&0\cr 0 &0_q\end{bmatrix}$.
From $q=k-p$ and $k\leq n-k$, we deduce $0\leq q\leq k$.

\medskip
3) Let $x$ be a critical point and suppose $n-k< k$. We write
$$T=m\begin{bmatrix}D&0\cr\end{bmatrix}b^*, \quad D=\diag[t_1,\dots ,t_{n-k}].$$
The same argument gives $D=\begin{bmatrix}I_p&0\cr 0 &0_{n-k-p}\end{bmatrix}$ and $0\leq p\leq n-k$.
With $q=k-p$, this gives $2k-n\leq q\leq k$.
\end{proof}

Let us detail some easy examples of critical submanifolds.

\begin{example} If $k=1$, then $n>1$ hence $n-k\geq 1=k$ and the  function $h$ has two critical levels $q=0,1$.
In fact,  $X_{n,1}=\Sp(n)/\Sp(n-1)$ is the sphere $S^{4n-1}$ and $P\in \HH^{1\times 1}$ is a quaternion 
verifying  $P^*P=\vert P\vert^2\leq 1$. 
The function $h$, defined by $h(x)=\vert P\vert^2$, has~1 as maximum and 0 as minimum  values.
\end{example}

\begin{example}[\bf Maximum level]\label{exa:maxlevel}
The maximum  level $q=k$  implies  $P^*P=I_k$ and  $P=ab^*\in \Sp(k)$,
hence $T^*T=0$ and  $T=0$.
So, the critical level $\Sigma_{k}$ is the space of matrices $\block{0}{P}$  
diffeomorphic to the symplectic group $\Sp(k)$ which can also be viewed as the Stiefel manifold $X_{k,k}$. 
\end{example}

\begin{example}[\bf Minimum level]\label{exa:minlevel}
Suppose  $n\geq 2k$. The minimum level corresponds to $q=0$, thus $p=k$ and $P=0$. We deduce 
$T^*T=I_k-P^*P=I_k$, which means that $x=\block{T}{0}\in X_{n,n-k}$. Thus $\Sigma_{0}=X_{n-k,k}$.

The second case, $n-k< k$, is more involved and is considered in \exemref{EXAMPLEINVOLVED}. 
\end{example}

\section{Hessian}\label{sec:hessian}

The tangent vector field $\grad h$ 
when differentiated at a critical point $x\in X_{n,k}$ gives rise to a linear map
\begin{equation}\label{equa:hess}
\Hess h_x\colon v\in   T_xX_{n,k} \mapsto \nabla_v\grad h\in T_xX_{n,k},
\end{equation}
called the Hessian of $H$. We determine it in this section but, before, let us recall some notations.

\medskip
\paragraph{\bf Notations}
Let $x_{0}=\block{0}{I_{k}}$. For any $x=\block{T}{P}\in X_{n,k}$, there exists an element
$A=\begin{bmatrix}\alpha&T\cr \beta &P\cr\end{bmatrix}
\in\Sp(n)$
such that $x=A x_{0}$. Moreover, as 
$T_{x}X_{n,k}=A\,T_{x_{0}}X_{n,k}$, any vector $v\in T_{x}X_{n,k}$
can be written as $v=A\,\block{X}{Y}$ with $Y+Y^*=0$. 
We  keep these notations all along this section.

\medskip
Now comes the main result of this section.

\begin{theoremb}\label{HESSIANFORMULA}
The Hessian of the map $h(x)=\Tr(P^*P)$, at the critical point $x$ and the vector 
$v\in T_{x}X_{n,k}$, is given by 
\begin{eqnarray}
\Hess h_x(v)
&=&
-2(vx^*+xv^*)x_0P -2\block{0}{PP^*-I}x_0^*v \nonumber\\
&=&
-2(vx^*+xv^*)x_0P+2\block{0}{\beta X}. \label{LONGER}
\end{eqnarray}
\end{theoremb}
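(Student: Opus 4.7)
My plan is to compute the Hessian as the ambient derivative of the gradient vector field, then to simplify the resulting expression using the critical point condition $TP^*=0$ established in \propref{prop:points} together with the symplectic identities provided by $A\in \Sp(n)$.

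First I would recall that for a submanifold $M\subset \R^N$, at a critical point of a function $h$ the covariant derivative coincides with the ambient derivative, since $\nabla_v \grad h = \proj_x(d(\grad h)_x(v))$ and the second fundamental form contributes only through $\grad h$ which vanishes at the critical point. So no projection step is needed. Writing $P=x_0^*x$ and $x_0P = x_0x_0^*x$, \propref{prop:gradient} becomes the compact form
$$\grad h_x = -2(xx^*-I_n)\,x_0 x_0^*\,x.$$
A direct differentiation in the ambient space yields
$$d(\grad h)_x(v) = -2(vx^*+xv^*)\,x_0 P \;-\; 2(xx^*-I_n)\,x_0 x_0^*\,v.$$

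For the second summand, I would compute $x_0 x_0^* v = \block{0}{x_0^*v}$ and then $xx^* \block{0}{x_0^*v} = \block{TP^*\,x_0^*v}{PP^*\,x_0^*v}$. The critical point condition $TP^*=0$ kills the top block, giving
$$(xx^*-I_n)\,x_0 x_0^*\,v \;=\; \block{0}{PP^*-I_k}\,x_0^*v,$$
which proves the first formula in the statement.

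To pass to the second formula, I would expand $v = A\block{X}{Y}$ with $A=\bigl[\begin{smallmatrix}\alpha&T\\\beta&P\end{smallmatrix}\bigr]$, so $x_0^*v = \beta X + PY$, and reduce the claim to
$$(PP^*-I_k)(\beta X+PY) = -\beta X.$$
The relation $\beta\beta^*+PP^*=I_k$ from $AA^*=I_n$ gives $PP^*-I_k=-\beta\beta^*$. The relation $T^*\alpha + P^*\beta = 0$ from $A^*A=I_n$ gives $PP^*\beta = -(TP^*)^*\alpha$, which vanishes at a critical point; hence $\beta\beta^*\beta = \beta - PP^*\beta = \beta$. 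The relation $\alpha\beta^* + TP^*=0$ gives $\beta\alpha^*=0$ at a critical point, which combined with $\beta^*P = -\alpha^*T$ yields $\beta\beta^* PY = \beta(\beta^*P)Y = -\beta\alpha^*TY = 0$. Putting these together,
$$\beta\beta^*(\beta X+PY) = \beta\beta^*\beta\,X + 0 = \beta X,$$
so $(PP^*-I_k)(\beta X+PY) = -\beta X$, which yields (\ref{LONGER}).

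The only delicate step is the second reduction: one must recognize which of the eight block identities coming from $A^*A=I_n$ and $AA^*=I_n$ are activated by $TP^*=0$ and deploy them in the right order. Once those identities are in hand, everything reduces to bookkeeping with $4\times 4$ block products.
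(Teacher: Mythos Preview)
Your argument is correct and takes a more direct route than the paper. The paper invokes the Absil--Mahony--Trumpf decomposition $\Hess h_x(v) = \PP_x \Hess\phi_x(v) + \A_x(v, \PP_x^\perp \grad\phi_x)$, which requires separately computing the ambient Hessian of the extension $\phi$, its normal projection, and the Weingarten map (Lemmas~\ref{NORMALHESS} and~\ref{WEING}), before assembling the pieces. You instead extend the \emph{intrinsic} gradient itself by the closed formula $G(x)=-2(xx^*-I_n)x_0x_0^*x$ and differentiate once, observing that at a critical point the normal component of $DG_x(v)$ is $\mathrm{II}(v,\grad h_x)=0$, so no projection and no Weingarten computation are needed. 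This is cleaner and shorter; the price is that the shortcut is specific to critical points, whereas the paper's machinery remains valid elsewhere.

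For the passage from the first line to~\eqref{LONGER}, you actually supply a verification of $(PP^*-I_k)(\beta X+PY)=-\beta X$ that the paper's proof leaves implicit. Your chain through $\beta\beta^*=I_k-PP^*$ is correct but slightly roundabout: since $PP^*P=P$ at a critical point (established in \propref{prop:points}) one gets $(PP^*-I_k)PY=0$ immediately, and from $P^*\beta=-T^*\alpha$ together with $PT^*=(TP^*)^*=0$ one gets $PP^*\beta=-PT^*\alpha=0$, hence $(PP^*-I_k)\beta X=-\beta X$ in one step each.
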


\begin{remark}
The first expression shows that the Hessian only depends on $
x$ and $v$. In this form, it is not obvious at all that this vector belongs to the tangent space $T_xX_{n,k}$. 
Replacing
$x$, $x_{0}$ and $v$  by their value in \eqref{LONGER} shows that
\begin{equation}\label{COMPACT}
\Hess h_x(v)=
-2A\block{XP^*P-\beta^*\beta X}{X^*\beta^*P-P^*\beta X}
\end{equation} 
Since the matrix $X^*\beta^*P-P^*\beta X$ is skew-Hermitian, we can now notice that
$\Hess h_x(v)\in A\cdot T_{x_0}X_{n,k}=T_{x}X_{n,k}$. 
\end{remark}

To  prove  this theorem,  let us  go back to the beginning. In \eqref{equa:hess},
$\nabla$ is the connection on the Riemannian submanifold $X_{n,k}\subset \R^N$, with $N={4n\times 4k}$, so if $\widetilde{\grad h}\colon \R^N\to \R^N$ is a vector field extending $\grad h$ to all the Euclidean space, it is 
(see \cite{MR909697})
$$\nabla_v\grad h=\proj_x D_{x}(\widetilde{\grad h})(v).$$ 
We can  denote it
$$\proj_x D_v(\widetilde{\grad h})$$
since the connection in  $\R^N$ is the usual derivative. Hence, we have
\begin{equation}\label{HESSFORMULA}
\Hess h_x(v)=\proj_x \frac{d}{dt}_{\vert t=0}\widetilde{\grad h}_{x+tv}.
\end{equation}

This situation is covered by \cite{MR3126065} which contains  an explicit formula for 
the Hessian of any map $h\colon M \to \R$,
 which is the restriction to a Riemannian submanifold $M\subset \R^N$  
 of a function $\phi\colon \R^N \to \R$ defined in the Euclidean space. 
 We follow this pattern for $M=X_{n,k}$ and $h$, $\phi$ the functions defined in \secref{sec:gradient}.
 Recall that the authors of \cite{MR3126065} introduce
 \begin{itemize}
 \item the orthogonal projectors on the tangent and the normal spaces, 
 $\PP_{x}\colon \R^N\to T_{x}X_{n,k}$ and $\PP_{x}^\perp=\id-\PP_{x}\colon \R^N\to \nu_{x}$,
 \item the Weingarten map, $\A_{x}\colon T_x X_{n,k}\times \nu_x \to T_xX_{n,k}$, of $X_{n,k}$ at $x$, 
 which is  given by
$$\A_{x}(v,w)=-\PP_xD_vW,$$
where $W$ is any normal vector field  extending locally the vector $w$,
 \end{itemize}
and  they prove the two following results.

\begin{proposition}\label{LEMAABS}\cite[Theorem 1]{MR3126065} 
For any $x\in X_{n,k}$, $u\in\R^N$ and $v\in T_{x}X_{n,k}$, the Weingarten map verifies
\begin{equation}\label{equa:wein}
\A_{x}(v,\PP_x^\perp u)=\PP_xD_v\PP_{x}\PP_{x}^\perp u.
\end{equation}
\end{proposition}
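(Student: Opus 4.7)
The plan is to verify the identity by choosing a convenient smooth normal extension of the vector $\PP_x^\perp u$, applying the definition $\A_x(v,w)=-\PP_x D_v W$ directly, and then invoking only the two elementary identities $\PP^\perp=\id-\PP$ and $\PP_y^2=\PP_y$.

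The natural extension to use is
\begin{equation*}
W(y):=\PP_y^\perp u,
\end{equation*}
where $u\in\R^N$ is regarded as a \emph{fixed} Euclidean vector; since the orthogonal projector onto $T_yX_{n,k}$ depends smoothly on $y\in X_{n,k}$, this $W$ is smooth, takes values in $\nu_y$ at every $y$, and evidently satisfies $W(x)=\PP_x^\perp u$. With $u$ constant one has $D_vW=(D_v\PP^\perp)u=-(D_v\PP)u$, so by the definition of the Weingarten map,
\begin{equation*}
\A_x(v,\PP_x^\perp u)=-\PP_x D_vW=\PP_x(D_v\PP)\,u.
\end{equation*}

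It remains to replace $u$ by $\PP_x^\perp u$ on the right-hand side. Differentiating the idempotency relation $\PP_y^2=\PP_y$ at $y=x$ in the direction $v$ gives
\begin{equation*}
(D_v\PP)\,\PP_x+\PP_x\,(D_v\PP)=D_v\PP,
\end{equation*}
and multiplying this identity by $\PP_x$ on the left and by $\PP_x$ on the right produces $\PP_x(D_v\PP)\PP_x=0$. Decomposing $u=\PP_x u+\PP_x^\perp u$, the tangential summand is annihilated by $\PP_x(D_v\PP)$, and one concludes
\begin{equation*}
\PP_x(D_v\PP)\,u=\PP_x(D_v\PP)\,\PP_x^\perp u,
\end{equation*}
which is exactly the right-hand side of \eqref{equa:wein}.

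The only genuine obstacle is notational: $D_v\PP$ has to be interpreted as the Euclidean derivative at $x$ in the direction $v$ of the \emph{matrix-valued} map $y\mapsto\PP_y$, and not as $D_v$ applied to a composition $\PP\cdot\PP^\perp$ (which is identically zero and would trivialize the statement) nor as some covariant derivative. Once this convention is fixed, the argument reduces to the one-line choice of normal extension plus the idempotency trick $\PP_x(D_v\PP)\PP_x=0$, and I expect no further difficulty.
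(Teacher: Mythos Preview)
Your argument is correct. The paper does not supply its own proof of this proposition: it simply quotes it as \cite[Theorem~1]{MR3126065} and uses it as a black box. What you have written is essentially the proof given in that reference---choose the canonical normal extension $W(y)=\PP_y^\perp u$, use $D_v\PP^\perp=-D_v\PP$, and exploit the differentiated idempotent relation to kill the tangential part of $u$.

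Your closing caveat about the reading of $D_v\PP_x\PP_x^\perp u$ is well placed: in the paper's notation the second $\PP_x^\perp u$ must be treated as a \emph{frozen} vector, with $D_v$ acting only on the projector $y\mapsto\PP_y$; otherwise the right-hand side would vanish identically and the formula would be useless. With that convention fixed, nothing more is needed.
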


\begin{proposition}\cite[Formula (10)]{MR3126065}\label{prop:HESSABIL}
The Hessian of the map $h$ whose extension is $\phi$ is given by
\begin{equation}\label{HESSABIL}
\Hess h_x (v)=\nabla_v\grad h=\PP_x\Hess \phi_x(v)+\A(v,\PP_x^\perp\grad \phi_x).
\end{equation}
\end{proposition}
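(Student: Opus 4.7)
My plan is to apply \eqref{HESSFORMULA} (a consequence of Proposition \ref{prop:HESSABIL}) with an explicit smooth extension of $\grad h$ to the ambient space, then simplify at a critical point using Proposition \ref{prop:points} together with the symplectic relations encoded in $A=\begin{bmatrix}\alpha&T\\\beta&P\end{bmatrix}\in\Sp(n)$.

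Writing $x_0=\block{0}{I_k}$, I observe first that $h(x)=\Tr(x^*x_0x_0^*x)$, so Lemma \ref{lem:GRADEXT} yields $\grad\phi_x=2x_0x_0^*x=2x_0P$, and Lemma \ref{lem:NORMPROJECTION} then gives $\PP_x^\perp\grad\phi_x=2xP^*P$. Consequently the field
$$W_y:=2x_0x_0^*y-2yy^*x_0x_0^*y$$
is a smooth extension of $\grad h$ to $\R^N$. I will differentiate $W$ at $x$ in the direction $v$; using $x_0x_0^*x=x_0P$ and $x^*x_0=P^*$, this yields
$$D_vW=2x_0x_0^*v-2(vx^*+xv^*)x_0P-2xP^*x_0^*v.$$
At a critical point, Proposition \ref{prop:points} gives $TP^*=0$, so $xP^*=\block{TP^*}{PP^*}=x_0PP^*$; substitution collapses the outer terms to
$$D_vW=-2(vx^*+xv^*)x_0P-2\block{0}{(PP^*-I)x_0^*v},$$
which is already the first form of the claimed formula after noting $x_0M=\block{0}{M}$.

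For the second form, I substitute $v=A\block{X}{Y}$, so that $x_0^*v=\beta X+PY$. The critical-point equation is equivalent to $PP^*P=P$, whence $(PP^*)^2=PP^*$; combined with $\beta\beta^*=I_k-PP^*$ (from $AA^*=I_n$) this forces $PP^*\beta\beta^*=0$, and since $(PP^*\beta)(PP^*\beta)^*=PP^*\beta\beta^*PP^*=0$, I obtain the key identity $PP^*\beta=0$ (equivalently $P^*\beta=0$). This gives
$$(PP^*-I)(\beta X+PY)=PP^*\beta X-\beta X+(PP^*P-P)Y=-\beta X,$$
converting the first form into $-2(vx^*+xv^*)x_0P+2\block{0}{\beta X}$, the second claimed expression. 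The same identity $P^*\beta=0$ also verifies tangency directly: a short computation gives $x^*(D_vW)+(D_vW)^*x=-2(X^*\beta^*P+P^*\beta X)=0$, so $D_vW\in T_xX_{n,k}$ and the projection $\PP_x$ in $\Hess h_x(v)=\PP_x(D_vW)$ acts as the identity.

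The main technical obstacle is extracting $PP^*\beta=0$ at a critical point: this is the crucial consequence of combining the critical-point equation with the symplectic relation $\beta\beta^*+PP^*=I_k$, and is what makes the two displayed forms agree while simultaneously guaranteeing tangency. Once this identity is in hand, both the passage between the forms and the tangency verification are short, and everything else reduces to bookkeeping on formulae already derived in the preceding subsections.
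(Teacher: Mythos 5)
Your argument does not establish Proposition \ref{prop:HESSABIL}. What you prove is (a clean version of) Theorem \ref{HESSIANFORMULA}: the explicit value of $\Hess h_x(v)$ for the particular function $h$ at a \emph{critical} point. The Proposition is a different claim, namely the general extrinsic identity of Absil--Mahony--Trumpf, valid at \emph{every} $x\in X_{n,k}$ (critical or not), asserting that $\nabla_v\grad h$ decomposes as the sum of the projected ambient Hessian $\PP_x\Hess\phi_x(v)$ and the Weingarten term $\A(v,\PP_x^\perp\grad\phi_x)$. Neither of these two terms ever appears in your computation, the Weingarten map is never used or identified, and all of your simplifications ($TP^*=0$, $PP^*P=P$, $P^*\beta=0$) are available only at critical points. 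There is also a circularity in your framing: you announce that you will "apply \eqref{HESSFORMULA} (a consequence of Proposition \ref{prop:HESSABIL})", which cannot be the starting point of a proof of that Proposition. In fact \eqref{HESSFORMULA} is the independent standard fact $\nabla_vZ=\proj_x\bigl(D_v\widetilde{Z}\bigr)$ for vector fields on a submanifold of Euclidean space, so your computation is not genuinely circular --- it simply proves a different statement. (For reference, the paper itself gives no proof here; it cites \cite[Formula (10)]{MR3126065}.)

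A proof of the actual statement would run as follows: write $\grad h_y=\PP_y\,\grad\phi_y$ along $X_{n,k}$, differentiate in the direction $v$ by the product rule to obtain
$$\nabla_v\grad h=\PP_x\bigl((D_v\PP)\grad\phi_x\bigr)+\PP_x\Hess\phi_x(v),$$
then differentiate the projector identity $\PP\circ\PP=\PP$ to get $\PP_x(D_v\PP)\PP_x=0$, which lets you replace $\grad\phi_x$ by $\PP_x^\perp\grad\phi_x$ in the first term, and conclude by Proposition \ref{LEMAABS}. That said, your computation has independent value: extending $\grad h$ by $W_y=2x_0x_0^*y-2yy^*x_0x_0^*y$ and differentiating directly yields a correct and shorter derivation of Theorem \ref{HESSIANFORMULA} that bypasses Lemmas \ref{NORMALHESS} and \ref{WEING} entirely, and the identity $P^*\beta=0$ at critical points (from $\beta\beta^*+PP^*=I_k$ and $(PP^*)^2=PP^*$) is correct and neatly handles both the passage between the two displayed forms and the tangency check. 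But it is a proof of the wrong proposition.
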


We develop some lemmas for the computation of the different expressions in the  formulae 
\eqref{equa:wein} and \eqref{HESSABIL}.
Let us begin with the orthogonal projection.
We already know (\lemref{lem:NORMPROJECTION}) that the normal projection, $\proj^\perp _x\colon \R^N \to \nu_x$, 
is defined, for each point $x\in X_{n,k}$ as
$$\proj^\perp_x(u)=\frac{1}{2}x(x^*u+u^*x).$$
Thus we extend the operator $\proj_x$  to the whole space by
$$ \PP_x(u)=u-\frac{1}{2}x(x^*u+u^*x), \quad \text{ for any }x,u\in\R^N.$$
\begin{lemma}For each $x\in \R^N$, 
the $\R$-linear map $\PP_x\colon \R^N\to \R^N$ is a projector, that is, it  verifies $\PP_x\circ\PP_x=\PP_x$.
\end{lemma}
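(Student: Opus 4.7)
The plan is to factor $\PP_x$ as $I - \pi_x$, where $\pi_x(u) := \frac{1}{2}x(x^*u + u^*x)$ is (the extension to $\R^N$ of) the normal projector identified in \lemref{lem:NORMPROJECTION}. Since $(I-\pi_x)^2 = I - 2\pi_x + \pi_x^2$, the projector identity $\PP_x^2 = \PP_x$ is equivalent to idempotency of $\pi_x$, so the whole problem reduces to showing $\pi_x \circ \pi_x = \pi_x$. This reformulation is also conceptually natural: $\pi_x$ should act as the identity on its own image, i.e.\ on vectors normal to $T_x X_{n,k}$.

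For this I would first observe that any image vector $\pi_x(u)$ has the form $xS$ with
\[
S := \tfrac{1}{2}(x^*u + u^*x),
\]
and that a direct inspection shows $S^* = S$, so $S$ is Hermitian. Substituting $w=xS$ into the definition of $\pi_x$ gives
\[
\pi_x(xS) \;=\; \tfrac{1}{2}x\bigl((x^*x)S + S^*(x^*x)\bigr),
\]
and then using $x^*x = I_k$ (the defining relation of the Stiefel manifold) together with $S^* = S$, the right-hand side collapses to $\tfrac{1}{2}x(S+S)=xS=\pi_x(u)$. The projector identity $\PP_x^2 = \PP_x$ then follows.

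The computation is entirely routine and poses no real obstacle; the only point that deserves attention is that the Stiefel relation $x^*x = I_k$ is genuinely needed, as a scaling test $x \mapsto \lambda x$ with $\lambda\neq 1$ immediately breaks idempotency. So the statement is to be read for $x$ in $X_{n,k}$, the extension being in the variable $u$, which is what the sentence preceding the lemma indicates.
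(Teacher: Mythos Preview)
Your proof is correct and essentially mirrors the paper's: the paper computes $x^*\PP_x(u) = \tfrac{1}{2}(x^*u - u^*x)$, observes this is skew-Hermitian so that $x^*\PP_x(u) + \PP_x(u)^*x = 0$, and concludes $\PP_x\PP_x(u) = \PP_x(u)$ --- which is the dual of your computation showing $\pi_x$ fixes its Hermitian image $\{xS : S^*=S\}$. Your remark that $x^*x = I_k$ is genuinely required (so the lemma should be read for $x \in X_{n,k}$, the extension being in $u$) is correct and applies equally to the paper's own argument, which uses that identity in its first displayed line.
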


\begin{proof} From
$$x^*\PP_x(u)=x^*u-\frac{1}{2}x^*x(x^*u+u^*x)=\frac{1}{2}(x^*u-u^*x),$$
we deduce
$$(x^*\PP_x(u))^*=-x^*\PP_x(u)$$
and
$$\PP_x\PP_x(u)=\PP_x(u)-\frac{1}{2}x(x^*\PP_x(u)+\PP_x(u)^*x)=\PP_x(u).\qedhere$$
\end{proof}

Then we consider the Hessian of the extension $\phi$. 

\begin{lemma}
The Hessian of the map $\phi\colon \R^{4n\times 4k}\to \R$ is
$$\Hess \phi_x (v) =2\block{0}{\beta X + PY}.$$ 
\end{lemma}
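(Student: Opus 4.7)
The plan is short and essentially formal. From \lemref{lem:GRADEXT}, the Euclidean gradient is
\[
\grad\phi_x \;=\; 2\block{0}{P},
\]
which depends linearly on $x$ via its lower $k\times k$ block. Since this is a \emph{linear} function of $x$, its Euclidean differential in direction $v\in\R^{4n\times 4k}$ is obtained by substituting $v$ for $x$; that is,
\[
\Hess\phi_x(v) \;=\; D_v\grad\phi \;=\; 2\block{0}{v_P},
\]
where $v_P$ denotes the lower block of $v$.

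It therefore remains only to read off $v_P$. Using the block decomposition $A=\begin{bmatrix}\alpha & T\\ \beta & P\end{bmatrix}\in\Sp(n)$ together with the hypothesis $v=A\block{X}{Y}$, a direct block-matrix multiplication gives
\[
v \;=\; \block{\alpha X+TY}{\beta X+PY},
\]
so that $v_P=\beta X+PY$. Substituting this into the previous display yields the claimed formula.

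There is no genuine obstacle here; the calculation is pure bookkeeping. The one conceptual point worth recording is that ``Hessian of $\phi$'' in this lemma means the ambient linear operator $v\mapsto D_v\grad\phi$ on $\R^{4n\times 4k}$ (not its restriction to $T_xX_{n,k}$ nor its identification with a bilinear form), since this is exactly the object fed into the Absil--Mahony--Trumpf formula \eqref{HESSABIL} in the next step.
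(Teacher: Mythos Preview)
Your proof is correct and essentially identical to the paper's: both invoke $\grad\phi_x=2\block{0}{P}$ from \lemref{lem:GRADEXT}, differentiate (the paper via $\frac{d}{dt}\big\vert_{t=0}\grad\phi_{x+tv}$, you via linearity), and identify the lower block of $v=A\block{X}{Y}$ as $\beta X+PY$. Your closing remark on the meaning of $\Hess\phi_x$ is apt and matches the paper's use.
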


Notice that $\beta X +PY=x_0^*v$ only depends on $v$.

\begin{proof}
Recall from \lemref{lem:GRADEXT} that $\grad \phi_x=\block{0}{2P}$. Thus, we have
$$\Hess \phi_x (v) =\frac{d}{dt}_{\vert t=0}\grad \phi_{x+tv}=2\frac{d}{dt}_{\vert t=0}\block{0}{P+t(\beta X +PY)}=2\block{0}{\beta X + PY}.\qedhere$$
\end{proof}

\begin{lemma}\label{NORMALHESS} 
The normal projection of the Hessian of $\phi$ is 
$$\proj^\perp_x\Hess\phi_x(v)= x\left(P^*(\beta X + PY)+(\beta X + PY)^*P\right).$$
\end{lemma}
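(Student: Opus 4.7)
The plan is to proceed by direct substitution, combining the explicit formula for the normal projection established in \lemref{lem:NORMPROJECTION} with the expression for $\Hess\phi_x(v)$ just computed in the preceding lemma. Since $\proj^\perp_x$ acts linearly on $\R^N$ and we have a completely closed-form description of both objects, no geometric argument should be needed.

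Concretely, set $W = \beta X + PY$ so that $\Hess\phi_x(v) = 2\block{0}{W}$. I would first compute the two auxiliary block products that enter the projection formula $\proj^\perp_x(u) = \tfrac{1}{2}x(x^*u + u^*x)$. Using $x = \block{T}{P}$, we get
$$x^*\Hess\phi_x(v) = \begin{bmatrix} T^* & P^* \end{bmatrix}\cdot 2\block{0}{W} = 2P^*W,$$
and symmetrically $(\Hess\phi_x(v))^* x = 2W^*P$. The factor of $2$ cancels the $\tfrac{1}{2}$ in front of the projection.

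Substituting yields
$$\proj^\perp_x\Hess\phi_x(v) = \tfrac{1}{2}\,x\bigl(2P^*W + 2W^*P\bigr) = x\bigl(P^*W + W^*P\bigr),$$
and then replacing $W$ by $\beta X + PY$ gives exactly the claimed expression.

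There is no real obstacle here: the only thing to watch is the block bookkeeping, namely that the upper block of $\Hess\phi_x(v)$ is zero, which makes $x^*\Hess\phi_x(v)$ depend only on the lower block $P$ of $x$, and that the symmetrization $P^*W + W^*P$ is already Hermitian, as it must be for $x$ times a Hermitian matrix to land in the normal space described by \lemref{lem:NORMPROJECTION}. Thus the statement reduces to a one-line computation.
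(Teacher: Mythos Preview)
Your proof is correct and essentially identical to the paper's own argument: both plug $\Hess\phi_x(v)=2\block{0}{\beta X+PY}$ and $x=\block{T}{P}$ into the formula $\proj^\perp_x(u)=\tfrac{1}{2}x(x^*u+u^*x)$ from \lemref{lem:NORMPROJECTION} and simplify the block products. The paper's version is just the two-line display you describe, without the explicit shorthand $W$.
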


\begin{proof}
This follows from a direct computation
\begin{eqnarray*}
\proj_x^\perp\Hess \phi_x (v)
&=&
\frac{1}{2}x(x^*\Hess \phi_x (v)+\Hess \phi_x (v)^*x)\\
&=&
x\left(\block{T}{P}^*\block{0}{\beta X + P Y}+\block{0}{\beta X + P Y}^*\;\block{\;\;T}{\;\;P}\right).\qedhere
\end{eqnarray*}
\end{proof}

Finally, we specify the Weingarten map.

\begin{lemma} \label{WEING}
\mbox{}
\begin{enumerate}[1)]
\item The  Weingarten map  of the Stiefel manifold $X_{n,k}$
is given by
\begin{equation}\label{SHAPE}
\A_x(v, w ) = -vx^*w -\frac{1}{2} x(v^*w + w^*v),\quad v\in T_xX_{n,k}, \;w\in \nu_x.
\end{equation}
\item In particular, if $x$ is a critical point and
 $v=A\block{X}{Y}\in T_xX_{n,k}$,  we have
  $$\A_x(v,\proj_x^\perp\grad \phi_x)=
-2vP^*P-x\left( P^*(\beta X + P Y)+(\beta X + P Y)^*P\right).$$
\end{enumerate}
\end{lemma}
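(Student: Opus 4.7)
For part 1) I would exploit Proposition \ref{LEMAABS} via a concrete normal extension of $w$: define $W(y)=\tfrac12 y(y^*w+w^*y)$, which is $\PP_y^\perp(w)$ at every point and satisfies $W(x)=w$ because $w\in\nu_x$. Then $\A_x(v,w)=-\PP_xD_vW$, and a direct differentiation at $x$ gives
$$D_vW=\tfrac12 v(x^*w+w^*x)+\tfrac12 x(v^*w+w^*v).$$
Two observations then drive the simplification. First, $w\in\nu_x$ forces $x^*w$ to be Hermitian (since $\nu_x=\{xH:H^*=H\}$), so $x^*w+w^*x=2x^*w$ and the first summand collapses to $vx^*w$. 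Second, the matrix $v^*w+w^*v$ equals the commutator of the Hermitian $x^*w$ with the skew-Hermitian $x^*v$, which is itself Hermitian; hence $\tfrac12 x(v^*w+w^*v)\in\nu_x$ is annihilated by $\PP_x$.

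It remains to compute $\PP_x(vx^*w)$ directly from $\PP_x(u)=u-\tfrac12 x(x^*u+u^*x)$. The inner combination $x^*(vx^*w)+(vx^*w)^*x$ equals the commutator $[x^*v,x^*w]$, which is $-(v^*w+w^*v)$ by the same calculation as above. Assembling everything yields $\A_x(v,w)=-vx^*w-\tfrac12 x(v^*w+w^*v)$, i.e.\ formula \eqref{SHAPE}.

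For part 2) I would substitute $w=\proj_x^\perp\grad\phi_x$ into \eqref{SHAPE}. From Lemmas \ref{lem:GRADEXT} and \ref{lem:NORMPROJECTION}, together with the fact that $x^*\grad\phi_x=2P^*P$ is already Hermitian, one gets $w=2xP^*P$; hence $x^*w=2P^*P$ and $-vx^*w=-2vP^*P$. Writing $v=A\block{X}{Y}$ gives $x^*v=Y$ with $Y^*=-Y$, and a direct calculation yields
$$v^*w+w^*v=-2YP^*P+2P^*PY=2[P^*P,Y],$$
so that \eqref{SHAPE} produces the intermediate expression $\A_x(v,\proj_x^\perp\grad\phi_x)=-2vP^*P-x[P^*P,Y]$.

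The main obstacle is to recognise this intermediate form as the one claimed. Expanding the claimed remainder gives $P^*(\beta X+PY)+(\beta X+PY)^*P=[P^*P,Y]+(P^*\beta X+X^*\beta^*P)$ (using $Y^*=-Y$), so the two expressions agree precisely when $x(P^*\beta X+X^*\beta^*P)=0$. I would reduce this to the critical-point identity $P^*\beta=0$. The proof of that identity is the heart of the matter: the $(2,1)$-block of $A^*A=I_n$ reads $\alpha^*T+\beta^*P=0$; multiplying on the right by $T^*$ and invoking the critical condition $PT^*=0$ (Proposition \ref{prop:points}) yields $\alpha^*TT^*=0$. Since $\im TT^*=\im T$ (as $\im T^*=(\ker T)^\perp$), this forces $\alpha^*T=0$, equivalently $P^*\beta=-T^*\alpha=0$, and the reduction is complete.
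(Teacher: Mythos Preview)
Your proof is correct in both parts, but the route you take in part~2) differs from the paper's.

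For part~1) the paper simply cites \cite[Section~4.1]{MR3126065} and remarks that the real-Stiefel argument carries over verbatim to the quaternionic case. Your argument is an explicit realisation of that citation: you choose the specific normal extension $W(y)=\PP_y^\perp(w)$, differentiate it, and carefully identify which pieces are tangent and which are normal. This is a legitimate and self-contained proof; nothing is missing.

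For part~2) the paper proceeds more directly. It observes that at a critical point $\proj_x^\perp\grad\phi_x=\grad\phi_x=\block{0}{2P}$, plugs this $w$ into \eqref{SHAPE}, and expands $v^*w+w^*v$ in block form using $A^*\block{0}{2P}=\block{2\beta^*P}{2P^*P}$. The combination $(\beta X+PY)^*P+P^*(\beta X+PY)$ then drops out without any further identities. Your approach instead rewrites $w=2xP^*P$ and computes abstractly via $x^*v=Y$, $x^*w=2P^*P$, arriving at the cleaner intermediate form $-2vP^*P-x[P^*P,Y]$. To reconcile this with the stated formula you then have to establish $P^*\beta=0$ at a critical point, which you do via $A^*A=I_n$ and the image argument $\im TT^*=\im T$. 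This extra lemma is correct and is never needed in the paper's computation; conversely, your intermediate form $-2vP^*P-x[P^*P,Y]$ is a genuinely simpler expression for the Weingarten term (and makes the later formula \eqref{COMPACT} more transparent), at the cost of the additional identity $P^*\beta=0$.
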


\begin{proof}
1) This formula is established in \cite[Section 4.1]{MR3126065} for the real Stiefel manifolds. 
The same proof works, word for word, in the quaternionic case.

\medskip
2) Notice first that, in a critical point $x$ for $h$, we have $\proj_x\grad \phi_x=\grad h_x =0$, thus
$\proj_x^\perp\grad \phi_x=\grad \phi_x$.
Then, from formula \eqref{SHAPE}, we deduce
\begin{eqnarray*}
\A_x(v,\proj_x^\perp\grad \phi_x)
&=&
\A_x\left(v,\block{0}{2P}\right)\\
&=&
-  vx^*\block{0}{2P}-\frac{1}{2}x\left(v^*\block{0}{2P}+\block{0}{\;2P}^*v\right)\\
&=&
-2vP^*P-\frac{1}{2}x\left(\block{X}{Y}^*\block{2\beta^*P}{2P^*P}+
\begin{bmatrix}2P^*\beta&\ 2P^*P\cr\end{bmatrix}\block{\;\;X}{\;\;Y}\right)\\
&=&
-2vP^*P- x\left(\block{X}{Y}^*\block{\beta^*}{P^*}P+P^*\begin{bmatrix}\beta&P\cr\end{bmatrix}\block{\;\;X}{\;\;Y}\right)\\
&=&
-2vP^*P-x\left( (\beta X + P Y)^*P+P^*(\beta X + P Y)\right).\qedhere
\end{eqnarray*}
\end{proof}

From these lemmas, we can now prove the formula given for the Hessian.

\begin{proof}[Proof of \thmref{HESSIANFORMULA}]
Recall $x_0^*v=\beta X + PY$.
With \lemref{NORMALHESS}, we have 
$$\proj_x\Hess\phi_x(v)=2\block{0}{x_0^*v}- x(P^*x_0^*v+v^*x_0P).$$
From \lemref{WEING}, we compute the Weingarten map: 
$$\A_x(v,\proj_x^\perp\grad \phi_x)=
-2vP^*P-x\big( P^*x_0^*v+v^*x_0 P\big).$$
From \propref{prop:HESSABIL}, we get
\begin{eqnarray*}
\Hess h_x (v)
&=&
2\block{0}{x_0^*v}-2vP^*P-2x(P^*x_0^*v+v^*x_0P)\\
&=&
2\block{0}{x_0^*v}-2vx^*x_0P-2xP^*x_0^*v-2xv^*x_0P\\
&=&
2\block{0}{x_0^*v}-2vx^*x_0P-2\block{T}{P}P^*x_0^*v-2xv^*x_0P\\
&=&
-2\block{0}{PP^*-I}x_0^*v-2(vx^*+ xv^*)x_0P,
\end{eqnarray*}
since $TP^*=0$ for a critical point. 
\end{proof}
\section{Eigenvalues and eigenvectors}\label{sec:eigenalgo}

In this section, we compute the eigenvalues of the Hessian,
by solving the equation $\Hess h_x(v)=v\lambda$, where $x\in\Sigma_{q}$, $v\in T_xX_{n,k}$ and $\lambda\in\HH$.
Recall that the critical submanifold is nondegenerate if the kernel of the Hessian coincides with the tangent space
at each point. Also, the index of $\Sigma_{q}$
 is the dimension of the largest subspace on which the Hessian is negative definite.

\begin{example}\label{exa:elmassencillo}
Let us begin with the simplest case: $x_0=\block{0}{I}$. Since it verifies the condition $TP^*=0$, this is a
critical point. In fact, we have $h(x_0)=k$, so it is a maximum.
For the value of the Hessian, we apply \eqref{COMPACT}, with $A=I_{n}$, $P=I_{k}$ and $\beta=0$ and get
the equation giving the eigenvalues:
$$\Hess h_{x_0}(v)=-2\block{X}{0} =\block{X\lambda}{Y\lambda}.$$
Therefore, the eigenvalues are $\lambda=0$ with multiplicity $k$, and $\lambda=-2$
with multiplicity $n-k$ equal to the index, showing that the transverse directions are all going down. 
Below, we develop this structure in the general case.
\end{example}

As it will soon appear, it is sufficient to  determine the eigenvalues in some particular points,
as $x_{0}$ for the case $q=k$.  We construct now such particular point in each singular submanifold.
Let $p,q\geq 0$, $p+q=k$ and $p+q'=n-k$.

\begin{definition}\label{def:mascomplicadoPero}
The \emph{notable point} $x_{0}^q$ of $\Sigma_{q}$
is defined by
$$x_{0}^q=\block{T_0}{P_0}
\quad \text{with}
\quad
P_0=\begin{bmatrix}0_p&0\cr 0 &I_q\end{bmatrix}
\quad
\text{and}
\quad
T_0=\begin{bmatrix}I_{p}&0\cr 0& 0_{q'\times q}\cr\end{bmatrix}.
$$
\end{definition}

Indeed, an easy computation gives the equalities 
$T_{0}^*T_{0}+P_{0}^*P_{0}=I_{k}$ and
$T_{0}P_{0}^*=0$
which justify the assertion $x_{0}^q\in \Sigma_{q}$.
(The previous point $x_{0}$ of \exemref{exa:elmassencillo} is $x_{0}^k$.)

\medskip
Let us return to the general case. Using \eqref{COMPACT},  the eigenvalues are the solutions of the system
\begin{equation}\label{equa:eigenvalues}
\left\{
\begin{array}{lcl}
-2(XP^*P-\beta^*\beta X)
&=&
X\lambda\\
-2(X^*\beta^*P -P^*\beta X)
& =&Y \lambda.
\end{array}
\right.
\end{equation}
Instead of solving it in general, we  show its invariance  by a transitive action, which reduces the resolution to
the case of the notable  points.

\begin{theoremb}
\mbox{}
\begin{enumerate}[1)]
\item The group $K_{n,k}=\Sp(n-k)\times \Sp(k)\times \Sp(k)$ acts transitively on the left on each critical level $\Sigma_q$ as
$$(m,a,b)\cdot x=(m,a,b)\cdot \block{T}{P}=\block{mTb^*}{aPb^*}=
\begin{bmatrix}m&\;\;0\cr 0&\;\;a\cr\end{bmatrix}\block{\;\;T}{\;\;P}b^*.$$
\item
The Hessian is invariant by the action, that is if $x=g\cdot x_0^q\in\Sigma_q$ and $v=g\cdot v_0\in T_xX_{n,k}$,
with $g\in K_{n,k}$  and $v_0\in T_{x_0^q}X_{n,k}$, then
$$\Hess h_x(v)=g\cdot \Hess h_{x_0^q}(v_0).$$
\end{enumerate}
\end{theoremb}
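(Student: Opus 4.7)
The plan is to address the two parts separately.

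\textbf{Part 1: the action.} To establish the action is well-defined, I first check that the formula preserves $X_{n,k}$: using $b^*b = I_k$ and cyclicity of trace, $(mTb^*)^*(mTb^*) + (aPb^*)^*(aPb^*) = b(T^*T+P^*P)b^* = I_k$; that it really is a left action follows from the group law $(m_1,a_1,b_1)(m_2,a_2,b_2) = (m_1m_2,a_1a_2,b_1b_2)$. Next I verify that each stratum $\Sigma_q$ is $K_{n,k}$-stable: cyclicity of trace gives $h((m,a,b)\cdot x) = h(x)$, and the critical-point condition $TP^* = 0$ is preserved since $(mTb^*)(aPb^*)^* = m(TP^*)a^*$.

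Transitivity is the main content. For an arbitrary $x \in \Sigma_q$, I would invoke the simultaneous singular value decomposition used in the proof of \propref{prop:points}, which produces a common $b \in \Sp(k)$ together with $a \in \Sp(k)$ and $m \in \Sp(n-k)$ such that $P = aP_0b^*$ and $T = mT_0b^*$, where $(T_0,P_0)$ are the blocks of the notable point $x_0^q$ from \defref{def:mascomplicadoPero}. This exhibits $x = (m,a,b) \cdot x_0^q$, treating both cases $n \geq 2k$ and $n < 2k$ uniformly (the block sizes $p,q,q'$ match by the dimensional identities $p+q=k$, $p+q'=n-k$). The key point -- and the one place to be careful -- is that the $b$-factor is shared between the SVDs of $T$ and $P$; this is precisely the content of the relative SVD cited in \propref{prop:points}.

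\textbf{Part 2: the Hessian.} The cleanest route is to observe that $K_{n,k}$ acts on the ambient space $\HH^{n \times k}$ by Euclidean isometries. Writing $G = \diag(m,a) \in \Sp(n)$, the identity $\Re\Tr((Gub^*)^*(Gvb^*)) = \Re\Tr(u^*v)$ follows from $G^*G = I_n$, $bb^* = I_k$ and cyclicity of trace, so the induced action on the Riemannian submanifold $X_{n,k}$ is by isometries. Combined with the $K_{n,k}$-invariance of $h$ established in Part 1, a standard general principle gives equivariance of both the gradient and the Hessian: for any Riemannian isometry $\varphi$ satisfying $h \circ \varphi = h$, one has $d\varphi \circ \Hess h = \Hess h \circ d\varphi$. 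Specializing $\varphi(y) = g \cdot y$ with $\varphi(x_0^q) = x$ and $d\varphi(v_0) = v$ yields the claimed identity.

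If a direct verification is preferred, the alternative is to substitute the transformation rules $x \mapsto Gxb^*$, $v \mapsto Gvb^*$, $A \mapsto GA\diag(I_{n-k},b^*)$, and the corresponding changes $P \mapsto aPb^*$, $\beta \mapsto a\beta$, into the compact formula \eqref{COMPACT}, and observe that the $b^*$ factors recombine via $b^*b = I_k$ to give $g \cdot \Hess h_{x_0^q}(v_0)$. The main expected obstacle is the bookkeeping in the transitivity step -- in particular ensuring that the \emph{same} $b$ brings both blocks of $x$ to standard form -- but this is already handled by the relative SVD. I would therefore present the conceptual isometry argument as the proof and treat the explicit computation only as a sanity check.
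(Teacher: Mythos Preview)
Your Part 1 is essentially the paper's proof: preservation of $\Sigma_q$ via $TP^*=0$ and invariance of $h$, then transitivity from the relative SVD of \cite{MR3990067}. The only difference is cosmetic (you also spell out that the formula defines a left action on $X_{n,k}$, which the paper takes for granted).

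Your Part 2 diverges from the paper. The paper proves equivariance by a direct computation: it writes $x=Ax_0$, $x_0^q=Bx_0$, works out the relation $A=\diag(m,a)\,B\,\diag(I_{n-k},b^*)$, reads off $\beta_0=a^*\beta$, $X_0=Xb$, $Y_0=b^*Yb$, and substitutes these into formula \eqref{COMPACT} to see the two sides match. Your main argument instead observes that $K_{n,k}$ acts on $\HH^{n\times k}$ by linear Euclidean isometries preserving both $X_{n,k}$ and $h$, whence the Hessian (as $\nabla\grad h$) is automatically equivariant; since the action is linear, $d\varphi=g\cdot(-)$ and the identity drops out. This is correct and more conceptual --- it would apply to any $K_{n,k}$-invariant function without touching the explicit Hessian formula. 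The paper's computational route, by contrast, is self-contained (no appeal to naturality of $\nabla$ under isometries) and produces the explicit transformation rules for $X,Y,\beta$, though these are not reused later. Your ``alternative'' direct verification is exactly what the paper does, so you have in effect both proofs.
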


\begin{proof} 
1) Let $x\in\Sigma_{q}$.
Set $T'=mTb^*$ and $P'=aPb^*$. We verify
$T'(P')^*=mTb^*bP^*a=m(TP^*)a=0$, thus,
from  \propref{prop:points}, $(m,a,b)\cdot x$ is a critical point. We compute now its image by $h$ and 
get directly from the definition,
$$h((m,a,b)\cdot x)=\Tr((P')^*P')=\Tr(bP^*a^*aPb^*)=\Tr(bP^*Pb^*)=\Tr(P^*P)=q.$$
Thus $(m,a,b)\cdot x\in \Sigma_{q}$.
Let $x_{0}^q$ be the notable point of $\Sigma_{q}$. From \cite[Theorem 3.1]{MR3990067},
there  exist  $a,b\in \Sp(k)$ and $m\in\Sp(n-k)$ such that
$P=aP_0b^*$ and $T=mT_0b^*$. This implies the transitivity of the action.

\medskip
2) If $x=Ax_0$, $x=g\cdot x_0^q$, and $x_0^q=Bx_0$, 
 where $A,B\in\Sp(n)$, $x_0=x_0^k=\block{0}{I_k}$
 and $T_{0}$ and $P_{0}$ as in \defref{def:mascomplicadoPero}.
 We first determine $A$ and $B$. Let us observe,
\begin{equation}\label{equa:AandB}
Ax_0=x=g\cdot x_0^q
=
\begin{bmatrix}m&0\cr 0&a\cr\end{bmatrix}\block{T_0}{P_0}b^*
=
\begin{bmatrix}m&0\cr 0&a\cr\end{bmatrix}Bx_0b^*=\begin{bmatrix}m&0\cr 0&a\cr\end{bmatrix}B\begin{bmatrix}I_p&0\cr 0&b^*\cr\end{bmatrix}x_0.
\end{equation}
Thus, we can take 
$$A= \begin{bmatrix}m&0\cr 0&a\cr\end{bmatrix}B\begin{bmatrix}I_p&0\cr 0&b^*\cr\end{bmatrix}$$ 
 so
$B=\begin{bmatrix}m^*&0\cr 0&a^*\cr\end{bmatrix}A\begin{bmatrix}I_p&0\cr 0&b\cr\end{bmatrix}$.
If we denote 
$A= \begin{bmatrix}\alpha&T\cr \beta &P\cr\end{bmatrix}
$
and $B=\begin{bmatrix}\alpha_{0}&T_0\cr \beta_{0} &P_0\cr\end{bmatrix}
$, the equality \eqref{equa:AandB} implies $\beta_0=a^*\beta$.

Now, let $v=A\block{X}{Y}=g\cdot v_0=g\cdot B\block{X_0}{Y_0}$, we have
$$A\block{X}{Y}
=
\begin{bmatrix}m&\;0\cr \;0&a\cr\end{bmatrix}B\block{X_0}{Y_0}b^*
=
A\begin{bmatrix}I_p&\;\;0\cr 0&\;\;b\cr\end{bmatrix}\block{\;\;X_0}{\;\;Y_0}b^*
$$
which gives
$
X_0=Xb$ and $ Y_0=b^*Yb$. 
By replacing $X=X_{0}b^*$, $P=aP_{0}b^*$ and $\beta=a\beta_{0}$ by their values, we get
$$\left\{
\begin{array}{ccl}
XP^*P-\beta^*\beta X
&=&
(X_{0}P_{0^*}P_{0}-\beta_{0}^*\beta_{0}X_{0})b^*,\\
X^*\beta^*P-P^*\beta X
&=&
b(X_{0}^*\beta_{0}^*P_{0}-P_{0}^*\beta_{0}X_{0})b^*,
\end{array}\right.$$
Then, we deduce from formula \eqref{COMPACT}:
$$\Hess h_{x}(v)
=
-2
\begin{bmatrix}m&\;\;0\cr 0&\;\;a\cr\end{bmatrix}
B
\block{X_{0}P_{0}^*P_{0}-\beta_{0}^*\beta_{0} X_{0}}{X_{0}^*\beta_{0}^*P_{0}-P_{0}^*\beta_{0} X_{0}}
b^*
=
g\cdot \Hess h_{x_0^q}(v_0).\qedhere
$$
\end{proof}

From the properties of a left action,  
we reduce the search of the eigenvalues and eigenvectors of the Hessian to the case of the notable points.

\begin{corollary}
The Hessian has the same eigenvalues in all points of the critical level~$\Sigma_q$. 
Moreover, the vector $v_0$ is a $\lambda$-eigenvector of $\Hess h_{x_{0}^q}$ if, and only if,
 $g\cdot v_0$ is a $\lambda$-eigenvector of $g\cdot \Hess h_{x_{0}^q}$.
\end{corollary}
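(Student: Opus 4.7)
The plan is to deduce this corollary directly from parts 1) and 2) of the preceding theorem, since it is essentially a bookkeeping consequence of the equivariance formula $\Hess h_x(v) = g \cdot \Hess h_{x_0^q}(v_0)$.

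First I would observe that the action of $K_{n,k}$ on any tangent space $T_xX_{n,k}$ is $\R$-linear: the formula $g \cdot w = \begin{bmatrix}m & 0\\ 0 & a\end{bmatrix} w\, b^*$ is built only from left and right matrix multiplications by fixed quaternionic matrices, and such operations commute with right multiplication by a real scalar. Since the Hessian of a smooth real-valued function on a Riemannian manifold is a symmetric bilinear form, its eigenvalues are real, so the relevant $\lambda$ lies in $\R$ and consequently $g \cdot (v_0 \lambda) = (g \cdot v_0)\,\lambda$. (If the scalars were allowed to range in $\HH$, this commutation would fail because of the right factor $b^*$, so the reality of $\lambda$ is essential.)

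Assuming that $v_0$ is a $\lambda$-eigenvector of $\Hess h_{x_0^q}$, I would then apply part 2) of the theorem to obtain
$$\Hess h_x(g \cdot v_0) = g \cdot \Hess h_{x_0^q}(v_0) = g \cdot (v_0 \lambda) = (g \cdot v_0)\,\lambda,$$
showing that $g \cdot v_0$ is a $\lambda$-eigenvector of $\Hess h_x$. The converse direction is obtained by applying $g^{-1} \in K_{n,k}$ to the same equality, since the action of $K_{n,k}$ is invertible.

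Finally, transitivity from part 1) ensures that every $x \in \Sigma_q$ can be written as $x = g \cdot x_0^q$ for some $g \in K_{n,k}$, and the assignment $v_0 \mapsto g \cdot v_0$ is an $\R$-linear isomorphism $T_{x_0^q}X_{n,k} \to T_xX_{n,k}$ sending the $\lambda$-eigenspace of $\Hess h_{x_0^q}$ bijectively onto the $\lambda$-eigenspace of $\Hess h_x$. Hence the two Hessians share the same spectrum with matching multiplicities, which is precisely the content of the corollary. There is no real obstacle in this argument; the only subtle point is to keep the scalar $\lambda$ real so that it commutes with the two-sided matrix action defining $K_{n,k}$.
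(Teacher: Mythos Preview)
Your proof is correct and matches the paper's approach: the paper gives no explicit argument for this corollary, simply prefacing it with ``From the properties of a left action,'' so you have filled in exactly the intended details. Your remark that $\lambda$ must be real---because the Hessian is a self-adjoint $\R$-linear endomorphism---and hence commutes with the two-sided action $v\mapsto \begin{bmatrix}m&0\\0&a\end{bmatrix}v\,b^*$ is a genuine clarification the paper leaves implicit, and the conjugation $\Hess h_x = \phi_g\circ\Hess h_{x_0^q}\circ\phi_g^{-1}$ you describe is precisely why the spectra (with multiplicities) agree.
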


\subsection{\bf Eigenvalues, kernel and index at the notable point $x_{0}^q$}
We first have to solve the system
\begin{equation}\label{SYSTEM}
\left\{
\begin{array}{ccl}
-2(X_0P_0^*P_0-\beta_0^*\beta_0 X_0)
&=&
X_0\lambda,\\
-2(X_0^*\beta_0^*P_0 -P_0^*\beta_0 X_0)
&=&
Y_0 \lambda,
\end{array}\right.
\end{equation}
with $T_{0}$ and $P_{0}$ as in \defref{def:mascomplicadoPero}.
We complete $\block{T_0}{P_0}$ to a symplectic matrix 
$B=\begin{bmatrix}\alpha_{0}&T_{0}\cr \beta_{0} &P_{0}\cr\end{bmatrix}
$ with
$\beta_0=\begin{bmatrix}I_p&0\cr 0&0\end{bmatrix}$.
Then $\beta_0^*\beta_0=\begin{bmatrix}I_p&0\cr 0&0\end{bmatrix}$ and
$P_0^*\beta_0=0$.
With these values, the second equation of \eqref{SYSTEM} becomes $Y_0\lambda=0$. For the first one, after a writing of
$X_{0}$ in block of the adequate size,
 $X_0=\begin{bmatrix}a_{1}&a_{2}\cr a_{3}&a_{4}\end{bmatrix}$, this equation is equivalent to the system:
$$2a_{1}=a_{1}\lambda,\quad a_{2}\lambda=0, \quad a_{3}\lambda=0, \quad -2a_{4}=a_{4}\lambda.$$
Therefore, the eigenvalues and eigenvectors are:
\begin{itemize}
\item $\lambda=0$ with $a_{1}=0$, $a_{4}=0$ . So $X_0=\begin{bmatrix}0&a_{2}\cr a_{3}&0\end{bmatrix}$ and $Y_0$ is an arbitrary
skew-Hermitian matrix,
\item $\lambda=2$,  with $Y_{0}=0$ and $a_{2}=0$, $a_{3}=0$, $a_{4}=0$,
\item $\lambda=-2$, with $Y_{0}=0$ and $a_{1}=0$, $a_{2}=0$, $a_{3}=0$.
\end{itemize}
From that, we deduce immediately  the dimension of the kernel and the index. For the kernel, we observe that
$a_{2}\in \HH^{p\times (k-p)}$ and $a_{3}\in \HH^{(n-k-p) \times p}$. 

\begin{corollary}\label{cor:dimker}
The (real) dimension of the kernel of $\Hess h_{x_{0}^q},$ is
$$4p (k-p)+4(n-k-p) p + 3k + 4\frac{k^2-k}{2}=4np-8p^2+2k^2+k.$$
\end{corollary}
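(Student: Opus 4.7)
My plan is to read off the kernel directly from the eigenvalue decomposition of the Hessian performed just above the statement, and then reduce the task to a real-dimension count. Immediately before the corollary, the three eigenspaces of $\Hess h_{x_0^q}$ are spelled out, with the kernel corresponding to $\lambda=0$. The kernel therefore consists of pairs $(X_0,Y_0)$ where $Y_0\in\HH^{k\times k}$ ranges freely over skew-Hermitian matrices and $X_0$ has its diagonal blocks $a_1,a_4$ killed, leaving free off-diagonal blocks $a_2\in\HH^{p\times(k-p)}$ and $a_3\in\HH^{(n-k-p)\times p}$.

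Since these three contributions vary independently in a direct sum of $\R$-vector spaces, I would simply add their real dimensions. For the two rectangular quaternionic blocks, this is just $4pq$ and $4q'p$ with $q=k-p$ and $q'=n-k-p$. For the skew-Hermitian block $Y_0$, I would use the standard parametrization: the $k$ diagonal entries are purely imaginary quaternions (3 real parameters each, contributing $3k$), while the $\binom{k}{2}$ strictly upper-triangular entries are free quaternions (4 real parameters each), the lower triangle being forced by $Y_0^*=-Y_0$. This yields $\dim_\R Y_0=3k+4\cdot\frac{k(k-1)}{2}$.

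Summing the three dimensions produces exactly the first expression in the corollary; expanding and collecting like terms in $p$ and $k$ (namely $4pk-4p^2+4np-4kp-4p^2+3k+2k^2-2k$) gives the closed form $4np-8p^2+2k^2+k$. There is essentially no substantive obstacle here: all the real work was done in the eigenvalue discussion, and the only care needed is in recording the block sizes correctly ($q=k-p$ versus $q'=n-k-p$) and in applying the quaternionic skew-Hermitian dimension count.
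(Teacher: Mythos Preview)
Your proposal is correct and is exactly the argument the paper has in mind: the corollary is stated without a separate proof, immediately after the observation that $a_2\in\HH^{p\times(k-p)}$ and $a_3\in\HH^{(n-k-p)\times p}$, so the intended justification is precisely the direct-sum dimension count you give for $a_2$, $a_3$, and the skew-Hermitian $Y_0$. Your expansion to $4np-8p^2+2k^2+k$ is also correct.
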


\begin{corollary}
The index of the critical point is the dimension of the eigenspace for $\lambda=-2$, that is, the dimension
of the vector space of matrices
$$X_0=\begin{bmatrix}0&0\cr 0&a_{4}\end{bmatrix},$$
which equals $4(n-k-p) (k-p)=4(n-2k+q)q$, with $q=k-p$\end{corollary}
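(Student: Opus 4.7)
The plan is to read off the index directly from the case analysis of eigenvalues immediately preceding this corollary. Since the group $K_{n,k}$ acts transitively on $\Sigma_q$ and the Hessian is equivariant (by the theorem established just before), the index is constant along $\Sigma_q$, so it suffices to work at the notable point $x_0^q$ where \eqref{SYSTEM} has already been solved.

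The key step is to correctly identify the shape and size of the $\lambda = -2$ eigenvectors. From the solution of \eqref{SYSTEM} just obtained, such eigenvectors correspond to $Y_0 = 0$ together with $X_0$ having only its bottom-right block $a_4$ nonzero. The block partition of $X_0 \in \mathbb{H}^{(n-k) \times k}$ is forced by the block decompositions used in \eqref{SYSTEM}: $P_0^* P_0$ splits the $k$ columns as $p + q$, while $\beta_0^* \beta_0$ splits the $n-k$ rows as $p + (n-k-p)$. Hence $a_4$ ranges freely over $\mathbb{H}^{(n-k-p) \times (k-p)}$.

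The final step is a real dimension count: $\dim_{\mathbb{R}} \mathbb{H}^{(n-k-p) \times (k-p)} = 4(n-k-p)(k-p)$, and substituting $q = k - p$ gives $n - k - p = n - 2k + q$, yielding the announced value $4(n-2k+q)q$. The only point that needs a moment of care is that the block partitions of $X_0$, $P_0^* P_0$ and $\beta_0^* \beta_0$ are not along the same lines, but their sizes are dictated unambiguously by the matrix products appearing in \eqref{SYSTEM}. No further verification is needed, since the negativity of the eigenvalue and the freedom of $a_4$ together exhibit a subspace of the claimed dimension on which the Hessian is negative definite, and the preceding analysis shows there is no larger such subspace.
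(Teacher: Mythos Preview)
Your proposal is correct and follows exactly the paper's approach: the paper likewise reads off the $\lambda=-2$ eigenspace from the case analysis of \eqref{SYSTEM} at the notable point, identifies it with the block $a_4\in\HH^{(n-k-p)\times(k-p)}$, and computes the real dimension. In fact the paper treats this as an immediate consequence and gives no further argument beyond stating the block size, so your write-up is simply a more explicit version of the same reasoning.
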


\section{Critical submanifolds}\label{sec:criticalsubmanifolds}

We can  describe  the critical level $\Sigma_q$ as a 
homogeneous space of the group $K_{n,k}=\Sp(n-k)\times \Sp(k)\times \Sp(k)$, which acts transitively.
For  that, we compute the isotropy  subgroup of the notable point $x_0^q=\block{T_0}{P_0}$ 
by solving the equation $g\cdot x_0^q=x_0^q$.
With $g=(m,a,b)\in K_{n,q}$, this  equation becomes the system,
\begin{equation}\label{equa:mabylosotros}
mT_0=T_0b\quad \text{and}\quad aP_0=P_0b.
\end{equation}
We decompose again the matrices in boxes of the adequate sizes:
$$m=\begin{bmatrix}m_{11}&m_{12}\cr m_{21}&m_{22}\end{bmatrix}, \quad
a=\begin{bmatrix}a_{11}&a_{12}\cr a_{21}&a_{22}\end{bmatrix} \quad \text{ and } \quad
b=\begin{bmatrix}b_{11}&b_{12}\cr b_{21}&b_{22}\end{bmatrix}.$$
Replacing these expressions for $m$, $a$ and $b$  in \eqref{equa:mabylosotros}, we obtain
$$m_{11}=b_{11}, \quad m_{21}=0, \quad b_{12}=0, \quad a_{12}=0, \quad a_{22}=b_{22}
\quad \text{and} \quad b_{21}=0.$$
In short, we have
$$m=\begin{bmatrix}m_{11}&0\cr 0&m_{22}\cr\end{bmatrix}\in \Sp(p)\times \Sp(n-k-p),\quad
a=\begin{bmatrix}a_{11}&0\cr 0&a_{22}\cr\end{bmatrix}\in \Sp(p)\times \Sp(k-p),$$
together with
$$b=\begin{bmatrix}m_{11}&0\cr 0&a_{22}\cr\end{bmatrix}.$$

\begin{corollary}\label{ISOTROPY} 
The critical submanifold $\Sigma_q$, with $q=k-p$, is diffeomorphic to the quotient $K_{n,k}/L_{n,k,q}$ with
$$K_{n,k}=\Sp(n-k)\times \Sp(k) \times \Sp(k)$$
and
$$L_{n,k,q}=\Sp(p)\times \Sp(n-k-p)\times \Sp(p)\times \Sp(k-p).$$
The injection $L_{n,k,q}\hookrightarrow  K_{n,k}$ 
is given by
$$(m_{1},m_{2},a_{1},a_{2})\mapsto
\left(
\begin{bmatrix}m_1&0\cr 0&m_2\cr\end{bmatrix},  
\begin{bmatrix}a_1&0\cr 0&a_2\cr\end{bmatrix},
\begin{bmatrix}m_1&0\cr 0&a_2\cr\end{bmatrix}
\right).
$$
\end{corollary}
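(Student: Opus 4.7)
My plan is to invoke the orbit–stabilizer theorem for smooth transitive actions of compact Lie groups, using the transitivity established in the preceding theorem and the stabilizer computation sketched just above the statement. Concretely, since $K_{n,k}$ is a compact Lie group acting smoothly and transitively on the critical submanifold $\Sigma_q$, the evaluation map $\mathrm{ev}\colon K_{n,k}\to \Sigma_q$, $g\mapsto g\cdot x_0^q$, is a surjective submersion whose fibre over $x_0^q$ is the isotropy subgroup $L_{n,k,q}=\mathrm{Stab}(x_0^q)$. The general theory then yields a diffeomorphism $K_{n,k}/L_{n,k,q}\xrightarrow{\ \cong\ }\Sigma_q$, so the whole problem reduces to identifying $L_{n,k,q}$ together with its embedding in $K_{n,k}$.

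The isotropy is computed by solving $(m,a,b)\cdot x_0^q=x_0^q$, which I would write out (as in the paragraph preceding the corollary) as the pair of matrix equations $mT_0=T_0b$ and $aP_0=P_0b$. Decomposing $m$, $a$, $b$ into $2\times 2$ blocks of sizes matching the block structure of $T_0=\begin{bmatrix}I_p&0\cr 0&0_{q'\times q}\end{bmatrix}$ and $P_0=\begin{bmatrix}0_p&0\cr 0&I_q\end{bmatrix}$, the equation $aP_0=P_0b$ forces $a_{12}=0$, $b_{21}=0$ and $a_{22}=b_{22}$, while $mT_0=T_0b$ forces $m_{21}=0$, $b_{12}=0$ and $m_{11}=b_{11}$. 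Combined with the symplecticity of $m$, $a$, $b$ (so the remaining diagonal blocks lie in the appropriate $\Sp$ groups), this gives exactly the four independent parameters $(m_1,m_2,a_1,a_2)\in\Sp(p)\times\Sp(n-k-p)\times\Sp(p)\times\Sp(k-p)$ and determines the embedding $L_{n,k,q}\hookrightarrow K_{n,k}$ by repeating $m_1$ in the $(1,1)$-block of the third factor and $a_2$ in the $(2,2)$-block of that same factor.

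Finally, I would read off the stated formula for the injection: the copy of $m_1$ in the first and third factors and the copy of $a_2$ in the second and third factors encode the ``diagonal'' identifications $m_{11}=b_{11}$ and $a_{22}=b_{22}$ derived above. Combining the diffeomorphism $\Sigma_q\cong K_{n,k}/L_{n,k,q}$ with this explicit embedding yields the corollary.

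The only real point requiring care is bookkeeping the block sizes $p$ versus $q=k-p$ in three ambient groups of different sizes ($\Sp(n-k)$, $\Sp(k)$, $\Sp(k)$) so that the shared blocks $m_{11}$ and $a_{22}$ land in factors of matching dimension; once this is set up correctly, the argument is essentially linear-algebraic bookkeeping and the orbit–stabilizer theorem does the rest.
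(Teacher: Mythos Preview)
Your proposal is correct and follows essentially the same approach as the paper: the corollary is deduced from the preceding transitive action via the orbit--stabilizer theorem, and the isotropy subgroup is computed exactly as you describe, by block-decomposing $m$, $a$, $b$ and reading off the constraints from $mT_0=T_0b$ and $aP_0=P_0b$.
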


\begin{corollary}\label{cor:todobien}
The dimension of $\Sigma_{q}$ coincides with the dimension of the kernel of the Hessian, thus 
its is a nondegenerate critical submanifold.
\end{corollary}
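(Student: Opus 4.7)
The plan rests on a general principle: at any point $x$ of a critical submanifold $\Sigma_q$, the tangent space $T_x\Sigma_q$ is automatically contained in $\ker \Hess h_x$. Indeed, for any smooth curve $\gamma\colon(-\varepsilon,\varepsilon)\to \Sigma_q$ with $\gamma(0)=x$ and $\gamma'(0)=v$, the identity $\grad h \circ \gamma \equiv 0$ holds; differentiating at $t=0$ and using the very definition \eqref{equa:hess} of the Hessian as $\nabla_v \grad h$ gives $\Hess h_x(v)=0$. So the only thing to establish is the equality of dimensions $\dim_\R T_x\Sigma_q = \dim_\R \ker \Hess h_x$.

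By the transitivity of the $K_{n,k}$-action on $\Sigma_q$ and the equivariance of the Hessian (both properties proved earlier in Section~\ref{sec:eigenalgo}), both quantities are independent of the chosen point, so it is enough to do the computation at the notable point $x_0^q$. The dimension of the kernel there was already computed in \corref{cor:dimker}:
$$\dim_\R \ker \Hess h_{x_0^q} = 4np - 8p^2 + 2k^2 + k.$$

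For the tangent side, \corref{ISOTROPY} yields the diffeomorphism $\Sigma_q \cong K_{n,k}/L_{n,k,q}$, hence
$$\dim_\R \Sigma_q = \dim_\R K_{n,k} - \dim_\R L_{n,k,q}.$$
Using the standard formula $\dim_\R \Sp(m) = m(2m+1)$, I would expand
$$\dim_\R K_{n,k} = (n-k)(2n-2k+1) + 2k(2k+1)$$
and
$$\dim_\R L_{n,k,q} = 2p(2p+1) + (n-k-p)(2n-2k-2p+1) + (k-p)(2k-2p+1),$$
then simplify the difference. The cross terms in $nk$ cancel, the pure $n^2$ terms cancel, and one is left with $4np - 8p^2 + 2k^2 + k$, exactly matching the kernel dimension.

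There is no conceptual obstacle here; the only work is the polynomial bookkeeping in $n,k,p$. Once the two numbers agree, the inclusion $T_x\Sigma_q \subseteq \ker \Hess h_x$ forces equality at every point of $\Sigma_q$, which by definition means that $\Sigma_q$ is a nondegenerate critical submanifold of $h$.
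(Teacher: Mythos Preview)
Your proposal is correct and follows essentially the same approach as the paper: compute $\dim\Sigma_q=\dim K_{n,k}-\dim L_{n,k,q}$ via $\dim\Sp(m)=2m^2+m$, obtain $4np-8p^2+2k^2+k$, and match it with the kernel dimension from \corref{cor:dimker}. Your version adds the standard justification that $T_x\Sigma_q\subseteq\ker\Hess h_x$ and the reduction to the notable point via equivariance, which the paper leaves implicit, but the argument is the same.
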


\begin{proof}
Using  $\dim \Sp(m)=2m^2+m$, we find 
$\dim \Sigma_{q}=\dim K_{n,k}-\dim L_{n,k,q}=4np -8p^2+2k^2+k$.
The equality $\dim \Sigma_{q}=\dim  \ker \Hess h_{x_{0}^q},$
follows from \corref{cor:dimker}.
\end{proof}

 \begin{proposition}\label{FIBERBUNDLE}
 The critical level $\Sigma_q$ is the total space of a fibre bundle of fibre $\Sp(k)$, of structural group
 $\Sp(k-q)\times \Sp(q)$ and of basis  the product of two Grassmannians $\Gr_{n-k,k-q}\times \Gr_{k,q}$.
\end{proposition}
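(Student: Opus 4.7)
The plan is to realize $\Sigma_q$ as the total space of a two-step quotient fibration arising from a nested chain of subgroups of $K_{n,k}$. By \corref{ISOTROPY}, one has $\Sigma_q \cong K_{n,k}/L_{n,k,q}$, and the key ingredient will be the closed normal subgroup
\[
N=\{I\}\times\{I\}\times \Sp(k)\,\triangleleft\, K_{n,k},
\]
namely the third factor of $K_{n,k}=\Sp(n-k)\times \Sp(k)\times \Sp(k)$. My first step is to verify that $L_{n,k,q}\cap N=\{e\}$: the embedding in \corref{ISOTROPY} sends $(m_1,m_2,a_1,a_2)\in L_{n,k,q}$ to an element of $K_{n,k}$ whose first two components are $\diag(m_1,m_2)$ and $\diag(a_1,a_2)$; demanding that both be trivial forces $m_i=a_i=I$, after which the third component $\diag(m_1,a_2)$ is also trivial. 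Because $N$ is normal, $L_{n,k,q}\cdot N$ is then a closed subgroup, and the trivial intersection identifies it with $\pi(L_{n,k,q})\times \Sp(k)$, where $\pi\colon K_{n,k}\to \Sp(n-k)\times\Sp(k)$ denotes the projection onto the first two factors.

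The second step is to read off the base and fiber from the chain $L_{n,k,q}\subset L_{n,k,q}\cdot N\subset K_{n,k}$, which gives a standard homogeneous fiber bundle
\[
\Sigma_q = K_{n,k}/L_{n,k,q} \longrightarrow K_{n,k}/(L_{n,k,q}\cdot N),
\]
with typical fiber $(L_{n,k,q}\cdot N)/L_{n,k,q}\cong N/(L_{n,k,q}\cap N)=\Sp(k)$. Since $\pi(L_{n,k,q})=[\Sp(k-q)\times \Sp(n-2k+q)]\times[\Sp(k-q)\times \Sp(q)]$ is block-diagonal in $\Sp(n-k)\times \Sp(k)$, the base factors through the classical quaternionic Grassmannian presentations
\[
K_{n,k}/(L_{n,k,q}\cdot N) =\frac{\Sp(n-k)}{\Sp(k-q)\times \Sp(n-2k+q)}\times \frac{\Sp(k)}{\Sp(k-q)\times \Sp(q)} =\Gr_{n-k,k-q}\times \Gr_{k,q}.
\]

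The last step, which I expect to be the most delicate, is to pin down the structural group via the conjugation action of $L_{n,k,q}$ on the fiber $N$. A short computation from the explicit embedding in \corref{ISOTROPY} gives
\[
(m_1,m_2,a_1,a_2)\cdot (I,I,g)\cdot (m_1,m_2,a_1,a_2)^{-1}=\bigl(I,\,I,\,\diag(m_1,a_2)\,g\,\diag(m_1,a_2)^{-1}\bigr),
\]
so the action factors through the homomorphism $L_{n,k,q}\to \Sp(k)$, $(m_1,m_2,a_1,a_2)\mapsto \diag(m_1,a_2)$, whose image is precisely the block-diagonal subgroup $\Sp(k-q)\times \Sp(q)\subset \Sp(k)$. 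The structural group therefore reduces to $\Sp(k-q)\times \Sp(q)$, as claimed. This reduction is really the content of the proposition: the two-step fibration machinery is routine, but the diagonal gluings in the embedding $L_{n,k,q}\hookrightarrow K_{n,k}$ are exactly what kills the two extra factors $\Sp(n-2k+q)$ and the second $\Sp(k-q)$, leaving only $\Sp(k-q)\times\Sp(q)$ to act effectively on the fiber.
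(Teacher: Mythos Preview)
Your chain $L_{n,k,q}\subset L_{n,k,q}\cdot N\subset K_{n,k}$ with $N=\{I\}\times\{I\}\times\Sp(k)$ does give the right base and the right fibre: the computations $L_{n,k,q}\cap N=\{e\}$ and $K_{n,k}/(L_{n,k,q}\cdot N)\cong\Gr_{n-k,k-q}\times\Gr_{k,q}$ are correct, and $(L_{n,k,q}\cdot N)/L_{n,k,q}\cong N=\Sp(k)$ follows as you say.

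The gap is in the structural-group step. In the homogeneous fibration $K_{n,k}/L_{n,k,q}\to K_{n,k}/(L_{n,k,q}\cdot N)$ the structural group is a priori $L_{n,k,q}\cdot N$, acting on the fibre $(L_{n,k,q}\cdot N)/L_{n,k,q}\cong N$ by \emph{left translation}, not by conjugation. Under that action the $N$-part acts on itself by left translation, which is faithful, so the effective structural group is certainly not smaller than $\Sp(k)$. Your conjugation computation is correct as a computation, but it does not describe the action governing the transition functions of this fibration, and therefore does not give the claimed reduction to $\Sp(k-q)\times\Sp(q)$.

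The paper obtains the reduction differently. Writing $p=k-q$, it first quotients $K_{n,k}$ by the ``inert'' part $\{(I,m_2,a_1,I)\}\subset L_{n,k,q}$, landing in $\mathcal{E}=X_{n-k,p}\times X_{k,k-p}\times\Sp(k)$. The residual action of $(m_1,a_2)\in\Sp(p)\times\Sp(k-p)$ on $\mathcal{E}$ is
\[
(\bar m,\bar a,b)\longmapsto\bigl(\bar m\,m_1,\ \bar a\,a_2,\ b\,\diag(m_1,a_2)\bigr),
\]
which exhibits $\Sigma_q$ directly as the bundle associated to the principal $\Sp(p)\times\Sp(k-p)$-bundle $X_{n-k,p}\times X_{k,k-p}\to\Gr_{n-k,p}\times\Gr_{k,k-p}$, with fibre $\Sp(k)$ on which $\Sp(p)\times\Sp(k-p)$ acts by \emph{right multiplication} through $\diag(m_1,a_2)$. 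That is what pins the structural group down to $\Sp(k-q)\times\Sp(q)$. If you want to stay in your framework, the fix is to use the product splitting $K_{n,k}=\bigl(\Sp(n-k)\times\Sp(k)\bigr)\times N$ and observe that the right $L_{n,k,q}$-action on the $N$-factor is right multiplication by $\diag(m_1,a_2)$; this factors through $\Sp(k-q)\times\Sp(q)$, and quotienting first by the kernel $\{(I,m_2,a_1,I)\}$ recovers exactly the paper's associated-bundle description.
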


\begin{proof}
With the notations of Corollary \ref{ISOTROPY}, let $(m,a,b)$ be an element of $K_{n,k}$ and $(m_1,m_2,a_1,a_2)$ an element of $L_{n,k,q}$. 
As proved in the computation of the isotropy group, the action on the right of $L_{n,k,q}$ on $K_{n,k}$ is given by
\begin{equation}\label{ACTION}
(m,a,b)\cdot(m_1,m_2,a_1,a_2) =
\left(m\begin{bmatrix}m_1&0\cr 0&m_2\cr\end{bmatrix}, 
a \begin{bmatrix}a_1&0\cr 0&a_2\cr\end{bmatrix},
b\begin{bmatrix}m_1&0\cr 0&a_2\cr\end{bmatrix}\right).
\end{equation}
We first consider the group of elements $(m_2,a_1)\in \Sp(n-k-p)\times \Sp(p)$.
The quotient of $K_{n,k}$ by this subgroup is 
$$\mathcal{E}=X_{n-k,p}\times X_{k,k-p}\times \Sp(k).$$
Moreover, the action of  $(m_1,a_2)\in\Sp(p)\times \Sp(k-p)$ on 
$(\bar m,\bar a,b)\in\mathcal{E}$ is given by
$$(\bar m,\bar a,b)\cdot (m_1,a_2)=
\left(\bar m m_1,\bar a a_2,b\begin{bmatrix}m_1&0\cr 0&a_2\cr\end{bmatrix}\right).$$
The quotient is known to be the critical submanifold $\Sigma_{q}$.
By definition, this is also the fiber bundle associated to the principal bundle 
$$\Sp(p)\times \Sp(k-p)\to
X_{n-k,p}\times X_{k,k-p}\to
\Gr_{n-k,p}\times \Gr_{k,k-p}$$
and the left $(\Sp(p)\times \Sp(k-p))$-space $\Sp(k)$. 
Its fibre is $\Sp(k)$ and the structural group is $\Sp(p)\times \Sp(k-p)$, see \cite[\S5 of Chapter 4]{MR1249482}
for more details on this construction.
\end{proof}

\begin{example}\label{EXAMPLEINVOLVED}
Let  $n<2k$, and $q=2k-n$ corresponding to the minimal level. The critical manifold
$\Sigma_{2k-n}$ is the total space of a fiber bundle, with fiber $\Sp(k)$, and basis
$$\Gr_{n-k,n-k}\times \Gr_{k,2k-n}= \Gr_{k,n-k}.$$
For instance, if $n=2k-1$, the minimal level corresponds to $q=1$ and $\Sigma_{1}$
fibers over the projective space $\Gr_{k,k-1}=\Gr_{k,1}=\HH P^{k-1}$ with fiber $\Sp(k)$.
\end{example}

\section{Integration of the gradient flow}\label{integration:flow}
In this section we show how to give an explicit description of the gradient flow for the function $h$. There are already  examples in the literature of gradient flows which are naturally integrable: for instance linear functions on compact symmetric spaces \cite{MR3336336}.

According to Proposition \ref{prop:gradient},
the gradient of
 $h(x)= \Tr(P^*P)$ at the point  $x=\block{T}{P}$
is
$$\grad h_x =-2\block{TP^*P}{(PP^*-I)P},$$
so if $\alpha(t)=\block{T(t)}{P(t)}$ the equation
$\alpha^\prime(t)=\grad h_{\alpha(t)}$
reduces to
\begin{equation}\label{MAIN}
T'=-2TP^*P, \quad P'=-2(PP^*-I)P.
\end{equation}
 
Let us fix an initial condition $\alpha(0)=\block{T(0)}{P(0)}$.
According to \cite[Theorem 3.1]{MR3990067}, 
the $k\times k$ matrix $P(0)$ admits a singular value decomposition
\begin{equation}\label{MAT:P}
P(0)=a\,
\begin{bmatrix}I_{p\times p}&0&0\\
0&\diag[c_i]_{q\times q}&0\\
0&0&0_{r\times r}
\end{bmatrix}\,b^*,
\end{equation} 
with $p+q+r=k$, $a,\,b\in\Sp(k)$, $p,q,r\geq 0$ and  $0<c_i<1$.

Analogously,
\begin{equation}\label{MAT:T}
T(0)=\,
m\begin{bmatrix}
0_{p'\times p}&0&0\\
0&\diag[-s_i]_{q\times q}&0\\
0&0&&-I_{r\times r}
\end{bmatrix}\,b^*,
\end{equation} 
with $0<s_i=\sqrt{1-c_i^2}<1$, $m\in \Sp(n-k)$ and $p'+q+r=n-k$. 

Moreover, if $\alpha(0)$ is not a critical point, then $q>0$, according to point 1) of  Proposition \ref{prop:points}.

For each $i=1,\dots,q$ choose constants $C_i>0$ such that
\begin{equation}\label{INIT}
c_i^2=\frac{C_i}{1+C_i}.
\end{equation}

\begin{lemma}\label{SOL:P}
With the notations of \eqref{MAT:P}, the equation $P'=-2(PP^*-I)P$ in \eqref{MAIN} has the solution
$$P(t)=a\,
\begin{bmatrix}I_{p\times p}&0&0\\
0&\diag[c_i(t)]_{q\times q}&0\\
0&0&0_{r\times r}
\end{bmatrix}\,b^*,$$
where
\begin{equation}\label{EXPO}
c_i(t)^2=\frac{C_ie^{4t}}{1+C_ie^{4t}}.
\end{equation}
\end{lemma}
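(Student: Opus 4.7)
The plan is to verify that the proposed formula satisfies the ODE $P' = 2P - 2PP^*P$ by plugging in the ansatz, reducing the matrix equation to uncoupled scalar equations, and then invoking uniqueness of solutions for ODEs to conclude.

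First, I would set up the ansatz $P(t) = a\,D(t)\,b^*$ with
\[
D(t) = \begin{bmatrix}I_{p\times p}&0&0\\ 0&\diag[c_i(t)]_{q\times q}&0\\ 0&0&0_{r\times r}\end{bmatrix},
\]
where $a,b\in\Sp(k)$ are the same (constant in $t$) unitary factors appearing in the SVD \eqref{MAT:P}. The initial condition $P(0)$ is then automatic by construction. The essential computation is that, since $D(t)$ is real diagonal (so $D(t)^*=D(t)$) and $b^*b=I_k$, one has
\[
P(t)P(t)^*P(t) = aD(t)b^*\,bD(t)a^*\,aD(t)b^* = aD(t)^3 b^*.
\]
Hence the matrix equation $P' = 2P - 2PP^*P$ is equivalent, after stripping off $a$ on the left and $b^*$ on the right, to the diagonal ODE $D'(t) = 2D(t) - 2D(t)^3$.

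This diagonal ODE decouples entry by entry. The diagonal entries equal to $1$ and $0$ are both fixed points of the scalar map $\xi \mapsto 2\xi - 2\xi^3$, so the blocks $I_{p\times p}$ and $0_{r\times r}$ persist. On the middle block each $c_i(t)$ must satisfy
\[
c_i'(t) = 2c_i(t)\bigl(1 - c_i(t)^2\bigr).
\]
I would then make the standard substitution $u_i(t) = c_i(t)^2$, which converts this into the logistic equation $u_i' = 4u_i(1-u_i)$, whose general solution is $u_i(t) = C_i e^{4t}/(1+C_i e^{4t})$. The initial condition \eqref{INIT} fixes the constant $C_i$ exactly as stated, yielding the formula \eqref{EXPO}.

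Finally, uniqueness of solutions of the ODE $P' = 2P - 2PP^*P$ (the right-hand side is polynomial, hence locally Lipschitz) guarantees that the candidate built from the ansatz is \emph{the} solution with the prescribed initial data. There is no real obstacle: the only point requiring attention is observing that the SVD factors $a$ and $b$ remain constant along the flow, which is precisely what allows the problem to collapse onto a family of uncoupled scalar logistic ODEs.
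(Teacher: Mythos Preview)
Your proof is correct and follows essentially the same approach as the paper: both reduce the matrix ODE to the diagonal equation $D'=-2(D^2-I)D$ via $P=aDb^*$, note that the constant blocks $1$ and $0$ are fixed, and verify the scalar identity on the $c_i(t)$. You add a bit more detail (the substitution $u_i=c_i^2$, the logistic form, and the explicit appeal to local Lipschitz uniqueness), but the argument is the same.
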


\begin{proof}
We have
$$P'=a\,D^\prime\,b^*$$
while
\begin{align*}
-2(PP^*-I)P=&-2(aDb^*bDa^*-I)aDb^*=\\
&-2a(D^2-I)a^*aDb^*=\\
&-2a(D^2-I)Db^*.
\end{align*}
So we have to prove that
$D^\prime=-2(D^2-I)D$.
This identity is trivially true for the entries $1$ or $0$ in $D$. For the other entries $c_i(t)$, it follows easily from \eqref{EXPO}.

Notice that the initial condition is $P(0)$, because $c_i(0)^2=c_i^2$, as it follows from \eqref{INIT}.

\end{proof}

\begin{lemma}\label{SOL:T}
With the notations of \eqref{MAT:T}, the solution of the equation $T^\prime=-2TP^*P$ in \eqref{MAIN} is
$$T(t)=
m\,
\begin{bmatrix}
0_{p'\times p}&0&0\\
0&\diag[-s_i(t)]_{q\times q}&0\\
0&0&&-I_{r\times r}
\end{bmatrix}\,b^*,
$$
where
\begin{equation}\label{FUNC:S}
s_i(t)^2=1-c_i(t)^2=\frac{1}{1+C_ie^{4t}}.
\end{equation}
\end{lemma}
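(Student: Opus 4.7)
The plan is to verify the proposed formula by direct substitution into the ODE, following the same pattern used in the proof of Lemma \ref{SOL:P}. First I would write the ansatz
$$T(t)=m\,D_T(t)\,b^*,\qquad D_T(t)=\begin{bmatrix} 0_{p'\times p}&0&0\\ 0&\diag[-s_i(t)]_{q\times q}&0\\ 0&0&-I_{r\times r}\end{bmatrix},$$
and using Lemma \ref{SOL:P} express $P(t)^*P(t)=b\,E(t)\,b^*$ with $E(t)=\diag(I_p,\diag[c_i(t)^2]_{q\times q},0_r)$. The point is that the factor $b^*b=I_k$ collapses in the product, giving $T(t)P(t)^*P(t)=m\,D_T(t)E(t)\,b^*$, and since $D_T(t)$ is block-diagonal of the same shape as $E(t)$ the multiplication is entrywise block by block. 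The last $r\times r$ block of $E(t)$ vanishes, so the $-I_r$ piece of $D_T$ is annihilated; the first $p'\times p$ block of $D_T$ is zero; only the middle $q\times q$ block survives.

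Second, I would compare entrywise. The matrix $T'(t)$ has the single nontrivial block $\diag[-s_i'(t)]_{q\times q}$, while $-2T(t)P(t)^*P(t)$ has the middle block $-2\,\diag[-s_i(t)c_i(t)^2]_{q\times q}=\diag[2s_i(t)c_i(t)^2]_{q\times q}$. Equating these reduces the matrix ODE $T'=-2TP^*P$ to the scalar system
$$s_i'(t)=-2\,s_i(t)\,c_i(t)^2,\qquad i=1,\dots,q.$$

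Third, I would verify that the formula \eqref{FUNC:S} satisfies this scalar ODE. Differentiating $s_i(t)^2=(1+C_ie^{4t})^{-1}$ gives $2s_is_i'=-4C_ie^{4t}/(1+C_ie^{4t})^2$, and since $c_i(t)^2=C_ie^{4t}/(1+C_ie^{4t})$ by \eqref{EXPO}, the right-hand side equals $-4\,s_i(t)^2c_i(t)^2$; dividing by $2s_i(t)$ yields exactly $s_i'=-2s_ic_i^2$. The initial condition is also correct: $s_i(0)^2=1/(1+C_i)=1-c_i^2$ by \eqref{INIT}, matching the datum in \eqref{MAT:T}. Uniqueness of solutions to the smooth ODE system in \eqref{MAIN} then concludes the argument.

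There is no real obstacle here; the work is essentially bookkeeping of block-matrix multiplications. A useful sanity check built into the formula is that $s_i(t)^2+c_i(t)^2=1$ for all $t$, which is precisely the scalar content of the Stiefel constraint $T(t)^*T(t)+P(t)^*P(t)=I_k$ being preserved by the gradient flow, as it must since $\grad h$ is tangent to $X_{n,k}$ by Proposition \ref{prop:gradient}.
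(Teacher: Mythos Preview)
Your proposal is correct and follows essentially the same approach as the paper: you substitute the ansatz, reduce the matrix ODE to the scalar equation $s_i'=-2s_ic_i^2$ (which the paper writes equivalently as $-s_i'=2s_i(1-s_i^2)$), and verify it together with the initial condition. Your version is somewhat more explicit about the block-matrix bookkeeping and adds the uniqueness remark and the $s_i^2+c_i^2=1$ sanity check, but the argument is the same.
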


\begin{proof}
We have
$$-2TP^*P=2m\,\begin{bmatrix}
0_{p'\times p}&0&0\\
0&\diag[s_i(t)c_i(t)^2]_{q\times q}&0\\
0&0&0_{r\times r}
\end{bmatrix}\,b^*,$$
so we only have to show that
$$-s_i^\prime=2s_i(1-s_i^2),$$
which follows easily from \eqref{FUNC:S}.

Notice that the initial condition is $T(0)$ because $s_i(0)=s_i$.
\end{proof}

Finally, notice that the solutions in Lemmas \ref{SOL:P} and \ref{SOL:T} verify
$T^*T+P^*P=I_k$ for all $t$, so we have completely integrated the gradient equation.


\begin{example}On the sphere $S^7=X_{2,1}$, the two coordinates of $x=\block{T}{P}$ are quaternions. The function $h(x)=\vert P\vert^2$ has two critical levels $h=0,1$, each one diffeomorphic to  $S^3$ (they correspond to $P=0$ or $T=0$, respectively).  Take a regular point $\alpha(0)=\block{T_0}{P_0}$.
The flow line passing through it is
$$T(t)=-s(t)\frac{T_0}{\vert T_0\vert}, \quad P(t)=c(t)\frac{P_0}{\vert P_0\vert},$$
where
$$s(t)^2=\frac{1}{1+Ce^{4t}}, \quad c(t)^2=\frac{Ce^{4t}}{1+Ce^{4t}}$$
and
$$C=\frac{\vert P_0\vert}{1-\vert P_0\vert}.$$
Taking limits to $\pm \infty$ we observe that the flow line goes from $\block{T_0/\vert T_0\vert}{0}$ to$\block{0}{P_0/\vert P_0\vert}$.
\end{example}

 
 \def\cprime{$'$}
\providecommand{\bysame}{\leavevmode\hbox to3em{\hrulefill}\thinspace}
\providecommand{\MR}{\relax\ifhmode\unskip\space\fi MR }
\providecommand{\MRhref}[2]{%
  \href{http://www.ams.org/mathscinet-getitem?mr=#1}{#2}
}
\providecommand{\href}[2]{#2}

\end{document}